%% file: RFN_main.tex
\documentclass[hidelinks,onefignum,onetabnum]{siamart251216}

\usepackage{graphicx} 
\usepackage{amsmath}
\usepackage{amsfonts}
\usepackage{indentfirst}
\usepackage{multirow}
\usepackage{booktabs}
\usepackage{overpic}
\usepackage{makecell}
\usepackage{bbm}
\ifpdf
\hypersetup{
  pdftitle={Physics Informed Random Feature Neural Networks for Solving PDEs},
  pdfauthor={C. Chen, C. Liao, M. Zhong}
}
\fi

\headers{Physics-Informed RFN for Solving PDEs}{C. Chen, C. Liao, M. Zhong}

\title{Physics-Informed Random Feature Neural Networks for Solving PDEs\thanks{Submitted to the editors DATE.
\funding{Zhong is supported by NSF-CCF-AoF grant $\#2225507$. CL is supported by the start-up funding from University of Arkansas. }}}

\author{Chi-an Chen\thanks{Department of Applied Mathematics, Mathematics, Illinois Institute of Technology, Chicago, IL, $60616$
  (\email{cchen156@hawk.illinoistech.edu}, \url{https://scholar.google.com/citations?user=WdF92UgAAAAJ&hl=en}).}
\and Chunyang Liao\thanks{Department of Mathematical Sciences, University of Arkansas, Fayetteville, AR, $72701$
  (\email{cliao1@uark.edu}, \url{https://liaochunyang.github.io/}).}
\and Ming Zhong\thanks{Department of Mathematics, University of Houston, Houston, TX, $77204$
  (\email{mzhong3@central.uh.edu}, \url{https://mingjzhong.github.io/}).}}
\input{marco}

\begin{document}

\maketitle

\begin{abstract}
Machine learning-based partial differential equations (PDEs) solvers have attracted significant attention in recent years. Most progress in this area has been driven by deep neural networks such as physics-informed neural networks (PINNs) and kernel method (such as physics-informed Gaussian Processes).  
We introduce a physics-informed random feature method for countering part of the spectral bias which PINN-based solvers are facing for a certain class of PDEs. Random feature method was originally proposed to approximate large-scale kernel machines and can be viewed as a specialized randomized neural network.
Compared to other state-of-the-art PINN-based solvers which require a large number of collocation points, our proposed method reduces the computational complexity.  
In this paper, we develop a rigorous approximation error analysis and derive high-probability error bounds on the $H^1$ norm.
We provide extensive numerical tests for verifying our theoretical guarantees on error decay rates, as well as several comparison tests to showcase our claimed capability for combating spectral bias in these deep learning based methods.
\end{abstract}

\begin{keywords}
Random Feature, Partial Differential Equations, Scientific Machine Learning, Kernel Method, Physics Informed Machine Learning
\end{keywords}

\begin{MSCcodes}
46E22, 65N35, 35A35
\end{MSCcodes}

\section{Introduction}\label{sec:intro}
Partial Differential Equations (PDEs) have been playing a crucial role in modeling natural phenomena.  Accurate numerical solutions are these PDE-driven models have long been a major research interest in the science and engineering communities.  Traditional numerical methods, including but not limited to Finite Difference, Finite Element and Spectral Methods, have been well developed for such a task for the past century~\cite{Eitan2012}.  Scientific Machine Learning (SciML), where machine learning techniques combined with scientific computing frameworks, have garnished increasing popularity in research communities in recent years~\cite{osti_1478744}.  Among these methods, Physics-Informed Neural Networks (PINNs) have emerged as one of the most influential approaches by embedding physical knowledge in terms of partial differential equations directly into the training objective~\cite{Raissi2019PINN}. Their mesh-free formulation, flexibility, and compatibility with automatic differentiation have enabled applications to a wide variety of forward and inverse PDE problems~\cite{PINN_review}. Despite these successes, PINNs remain challenging to train in practice due to the highly non-convex optimization over millions of network parameters, sensitivity to network architecture and hyperparameter choices, and the well-documented tendency to favor low-frequency solution components during training~\cite{WANG2022110768, Hao2026}.

Kernel methods provide a rigorous functional and analytic framework for function approximation and possess strong theoretical guarantees through reproducing kernel Hilbert spaces (RKHS)~\cite{BATLLE2025113488}. However, classical kernel methods suffer from poor scalability, since training requires manipulating dense kernel matrices whose computational cost grows cubically with the number of training points. Random feature methods provide an elegant compromise by approximating kernel functions using finite-dimensional randomized feature maps~\cite{Rahimi2007RFM}. 
Viewed from a deep learning perspective, random feature models are a special type of shallow neural networks whose hidden-layer parameters are randomly sampled and fixed, while only the output-layer coefficients are optimized. Unlike other randomized neural networks, such as Extreme Learning Machine (ELM) \cite{Huang2004ELM}, the hidden-layer weights and biases are sampled from desired probability distributions induced by the underlying kernel, thereby preserving the approximation properties of kernel methods while maintaining the computational efficiency of shallow neural networks. 

Motivated by these observations, we introduce Physics-Informed Random Feature Networks (PIRFNs) as a mathematically motivated addition to PINNs for solving PDEs. The central idea is to replace the trainable hidden layers in PINNs with kernel-induced random features while retaining the physics-informed loss formulation. As a result, only the output-layer coefficients are optimized, leading to a substantially simpler optimization problem together with rigorous approximation guarantees derived from theories based on RKHS. 

The objective of this work is to further develop physics-informed random feature networks into a more general and practical computational framework. From a theoretical perspective, we extend the approximation analysis to Sobolev spaces and establish new approximation results in the $H^1$ norm, providing stronger error estimates that are more relevant to Sobolev-space based Finite Element analysis. From an algorithmic perspective, we redesign the training procedure by focusing on the auto-differentiation workflow commonly used in PINNs together with the widely adopted Adam and L-BFGS optimizers. Consequently, the method retains the user friendly implementation pipeline of PINNs while preserving the computational advantages of random feature models.

Beyond the optimization framework, we also investigate how the design of random features influences approximation quality. Rather than employing a single random feature distribution for all variables, we introduce a product random feature construction that allows variables from different dimensions to be represented by different kernels and sampling distributions. This design is motivated by product kernels 
and provides additional flexibility for modeling evolutionary PDEs in which spatial and temporal variables possess distinct characteristic scales. Our numerical experiments demonstrate that appropriately designed random feature distributions can substantially improve approximation accuracy for several representative PDEs, including the Helmholtz equation, linear advection with relatively high wave speed, and the wave equation. These results suggest that kernel design plays a central role in the performance of random feature PDE solvers, although developing automatic kernel selection strategies remains an important direction for future research.

The main contributions of this paper are as follows:
\begin{itemize}
    \item[(I)] we employ an auto-differentiation based optimization framework using Adam and L-BFGS, making the proposed method readily applicable to a broad class of PDEs without manual Jacobian derivations;
    \item[(II)] we extend the theoretical analysis by establishing approximation error estimates in Sobolev $H^1$ spaces, providing stronger approximation guarantees for random feature models;
    \item[(III)] we introduce product random feature constructions that separately model spatial and temporal variables, enabling more flexible kernel design for time-dependent PDEs;
    \item[(IV)]  We present convergence studies together with comparisons against PINNs, self-adaptive PINNs, and Extreme Learning Machines on several representative benchmark problems, demonstrating the effectiveness of the proposed framework on several different kinds of PDEs.
\end{itemize}
The remainder of this paper is organized as follows. Section~\ref{sec:bg} reviews the necessary background on reproducing kernel Hilbert spaces, random feature methods, and physics-informed neural networks. Section~\ref{sec:main} presents the proposed physics-informed random feature framework together with the theoretical analysis. In Section~\ref{sec:numerics}, we present numerical experiments including numerical verification of approximation results and comparison on several benchmark PDEs.
Finally, we conclude in Section~\ref{sec:conclude} and discusses future research directions.
\subsection{Related Work}
The seminal paper~\cite{Raissi2019PINN} which introduced the basic concept of Physics-informed Neural Networks started a new era of scientific machine learning.  Since then, causality PINN~\cite{WANG2024116813}, self-adaptive PINN~\cite{MCCLENNY2023111722}, hard-constrained PINN~\cite{Hao2026}, time-marching PINN~\cite{lwight2021}, PINN with curriculumn training~\cite{3540261.3542294}, data-driven-viscosity PINN~\cite{COUTINHO2023112265}, domain and dimension co-decomposition PINN with MOE~\cite{shang2026dpinns, shang2026dimension}, have sprung up for improving the training difficulties when using PINN to solve various stiff PDEs and hyperbolic PDEs. A few more papers, such as~\cite{Shin2020Convergence, 10.1093/imanum/drab093,doumeche2025convergece,bonito2025convergence}, have been introduced to discuss the theoretical convergence and other properties of using PINN for solving PDEs.  The limited numbers are partly due to the immense difficulties in merging the PDE-theories into approximations provided by deep neural networks.

Kernel-based methods on the other hand can provide theoretical foundations, simple implementation, and competitive performance, which have been widely studied and applied in scientific computing, particularly for solving partial differential equations; see, for example, in~\cite{michael2017RKHS, CHEN2021kernelPDE, BATLLE2025113488,Jnathan2025PIKL}. Kernel method represents the target function in a RKHS, providing flexible, nonparametric approximation from scattered data. Besides the deterministic point of view, it is also equivalent to Gaussian process, which naturally enables uncertainty quantification~\cite{pfortner2022physics, Li2024Parameter, LEE2025GP}.

Existing kernel-based methods rely on inverting a large-scale kernel matrix, which is computationally expensive and limits the use of kernel methods for large-scale datasets. This motivates the development of scalable kernel surrogate methods, such as sparse Cholesky factorization~\cite{chen2025sparse} and random Fourier feature approximation~\cite{Rahimi2007RFM}. While the latter was originally proposed to approximate large-scale kernel machines, it can be viewed as a special case of randomized shallow neural networks where the input weights and biases are randomly sampled and fixed and only the weights in the readout layer are trainable. Randomized neural networks have also been used in scientific computing~\cite{DONG2021ELM,WANG2024Extreme, Liao2026PDE}, as they provide a simple implementation and an efficient training process, but their theoretical guarantees remain largely unexplored.
\subsection{Notation}
In this section, we denote the notation used throughout.  We let $\R$ and $\C$ be the set of real numbers and complex numbers. Denote $\irm=\sqrt{-1}$ the imaginary unit and $\overline{x}$ the complex conjugate of complex number $x$. Denote by $[m]={1,2,\dots,m}$. We denote vectors and matrices with boldface lowercase and uppercase letters, respectively. For two vectors $\xb,\yb\in\C^d$, the inner product is denoted by $\langle \xb,\yb\rangle = \sum_{i=1}^dx_i\overline{y}_i$. 
For a vector $\xb\in\R^d$, we denote by $\|\xb\|_p$ the $\ell^p$-norm of $\xb$. $\Ib_k$ is used to denote the identity matrix in $\R^{k\times k}$.  
For a domain $X\subset\R^d$, we denote $\|\cdot\|_{L^2(X)}$ the norm of $L^2(X)$ space (We omit the domain when there is no ambiguity).
Moreover, we denote $W^{s,p}(X)$ the Sobolev space consisting of $L^p(X)$ functions whose mixed partial derivatives up to order $s$ exist in the weak sense and are in $L^p(X)$. Its norm is denoted by $\|\cdot\|_{s,p}$. For the special case $p=2$, we use $H^s(X)$ to denote $W^{s,2}(X)$, equipped with norm $\|\cdot\|_{H^s}$ and inner product $\langle \cdot,\cdot\rangle_{H^s}$. We also denote $H^{s}_{\mix}(X)$ the Sobolev space of dominating mixed smoothness $s$ defined over domain $X$.
\section{Background}\label{sec:bg}
In this section, we review the theory of reproducing kernel Hilbert space (RKHS), kernel method, and random features method. We then introduce the formulation of physics-informed machine learning.
\subsection{Reproducing Kernel Hilbert Space and Kernel Method}\label{subsec:RKHS}
We first formalize a reproducing kernel Hilbert space in the following definition.
\begin{definition}
Let $\Hc$ be a Hilbert space of functions defined over set $X$.
We say that $\Hc$ is a {\bf reproducing kernel Hilbert space (RKHS)} if, for each $\xb\in X$, the point evaluation is a bounded linear functional, i.e. there exists some constant $M_\xb\geq0$ such that 
\begin{equation}
|f(\xb)| \leq M_\xb \|f\|_\Hc, \quad \mbox{for all } f\in\Hc.
\end{equation}
\end{definition}

In the context of PDEs, Sobolev spaces play a central role as the fundamental function spaces in which solutions and their weak derivatives can be rigorously defined and analyzed.
Moreover, Sobolev spaces are indeed RKHSs under some certain conditions. 
When $s>d/2$ ($d$ is the dimension of the domain), the Sobolev embedding theorem implies that point-evaluation functionals are continuous on $H^s(X)$. Consequently, $H^s(X)$ is a RKHS.

A RKHS induces a function of two inputs (kernel function) with the reproducing property. 
Following the Riesz representation theorem, there exists a function $K_\xb\in\Hc$ such that the following reproducing property holds
\begin{equation}
f(\xb) = \langle f,K_\xb\rangle_\Hc, \quad \mbox{for all } f\in\Hc.
\end{equation}
Since every point evaluation is a bounded linear functional, then we apply Riesz representation theorem one more time showing that there exists a function $K_{\xb'}\in\Hc$ such that
\begin{equation}
K_\xb(\xb') = \langle K_\xb, K_{\xb'}\rangle_\Hc. 
\end{equation}
Therefore, we obtain a {\bf kernel function} $K:X\times X\to\R$ such that $K(\xb,\xb') = \langle K_\xb,K_{\xb'}\rangle_\Hc$, which is called a reproducing kernel of $\Hc$.
For convenience, we define the canonical feature map $\phi:\xb\in X\mapsto K_\xb\in\Hc$ satisfying
\begin{equation}
K(\xb,\xb') = \langle K_\xb,K_{\xb'}\rangle_\Hc = \langle \phi(\xb), \phi(\xb')\rangle_\Hc.
\end{equation}
We define a positive (semi)definite kernel in the following definition. 
\begin{definition}
The kernel $K$ is a positive (semi)definite kernel, in the sense that, for any $m\geq1$ and any $\xb^{(1)},\dots,\xb^{(m)}\in X$, the $m\times m$ matrix with entries $K(\xb^{(i)}, \xb^{(j)}), i,j\in[m]$, is positive (semi)definite. 
\end{definition}
Above statement starts with a RKHS and then define its corresponding kernel function. 
On the other hand, starting with a positive semidefinite kernel $K$, there is a unique reproducing kernel Hilbert space with $K$ as its reproducing kernel. 
This results, known as the Moore-Aronszajn theorem, is stated in the following theorem.
\begin{theorem}[Moore-Aronszajn theorem \cite{Aronszajn1950RKHS}]
If $K: X\times X\to\R$ is a positive semidefinite kernel, then there exists a unique reproducing kernel Hilbert space $\Hc$ whose reproducing kernel coincides with $K$.
\end{theorem}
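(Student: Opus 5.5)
We prove existence by building the space from the kernel sections and completing it, and uniqueness by showing any RKHS with kernel $K$ contains the span of the $K_\xb$ densely with the same inner product. For existence, set $K_\xb := K(\xb,\cdot)$ and let $\Hc_0$ be the linear span of $\{K_\xb : \xb\in X\}$. On $\Hc_0$ we define
\begin{equation*}
\Big\langle \sum_{i} a_i K_{\xb^{(i)}}, \sum_j b_j K_{\yb^{(j)}} \Big\rangle_{\Hc_0} := \sum_{i,j} a_i b_j K(\xb^{(i)},\yb^{(j)}).
\end{equation*}
The right-hand side equals $\sum_j b_j f(\yb^{(j)})$ with $f=\sum_i a_iK_{\xb^{(i)}}$. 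It also equals $\sum_i a_i g(\xb^{(i)})$ with $g=\sum_j b_jK_{\yb^{(j)}}$, using the symmetry of $K$. Hence the value does not depend on the chosen representations, and the form is well defined. It is bilinear and symmetric, and positive semidefiniteness of $K$ gives $\langle f,f\rangle_{\Hc_0}\ge 0$. By construction the reproducing property $\langle f, K_\xb\rangle_{\Hc_0} = f(\xb)$ holds on $\Hc_0$.

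Definiteness follows from Cauchy--Schwarz, which is valid for any positive semidefinite symmetric bilinear form:
\begin{equation*}
|f(\xb)| = |\langle f,K_\xb\rangle_{\Hc_0}| \le \langle f,f\rangle_{\Hc_0}^{1/2} K(\xb,\xb)^{1/2}.
\end{equation*}
So $\langle f,f\rangle_{\Hc_0}=0$ forces $f\equiv 0$, and $\Hc_0$ is a pre-Hilbert space of functions on which point evaluations are bounded.

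The main obstacle is completing $\Hc_0$ while keeping a space of \emph{functions} rather than abstract equivalence classes. We take a Cauchy sequence $(f_n)$ in $\Hc_0$. The bound above shows $f_n(\xb)$ is Cauchy for every $\xb$, so it has a pointwise limit $f(\xb)$. Let $\Hc$ be the set of all such pointwise limits, with $\|f\|_\Hc := \lim_n \|f_n\|_{\Hc_0}$. Two facts must be checked, and the second is the delicate one:
\begin{itemize}
\item[(i)] The norm is well defined, i.e. two Cauchy sequences with the same pointwise limit have the same norm limit.
\item[(ii)] A Cauchy sequence converging pointwise to $0$ also converges to $0$ in $\Hc_0$-norm.
\end{itemize}
For (ii) we use the standard trick. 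Fix $\epsilon>0$ and $N$ with $\|f_n-f_m\|<\epsilon$ for $n,m\ge N$. Write $f_N=\sum_i a_iK_{\xb^{(i)}}$. Then
\begin{equation*}
\|f_n\|^2 = \langle f_n-f_N,f_n\rangle + \sum_i a_i f_n(\xb^{(i)}).
\end{equation*}
The first term is at most $\epsilon\sup_m\|f_m\|$, and the finite sum tends to $0$ as $n\to\infty$ because $f_n\to 0$ pointwise. Hence $\|f_n\|\to 0$, and (i) follows from (ii) applied to the difference of the two sequences. With this, $\Hc$ is a Hilbert space containing $\Hc_0$ densely, with the inner product defined by polarization. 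The reproducing property passes to the limit, since $f(\xb)=\lim_n f_n(\xb)=\lim_n\langle f_n,K_\xb\rangle=\langle f,K_\xb\rangle_\Hc$. Point evaluation is therefore bounded, so $\Hc$ is an RKHS with kernel $K$.

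For uniqueness, let $\mathcal{G}$ be any RKHS with reproducing kernel $K$. Its reproducing property gives $K_\xb\in\mathcal{G}$ and $\langle K_\xb,K_\yb\rangle_{\mathcal{G}}=K(\xb,\yb)$. So $\Hc_0\subset\mathcal{G}$, and the two inner products agree on $\Hc_0$. If $g\in\mathcal{G}$ is orthogonal to $\Hc_0$, then $g(\xb)=\langle g,K_\xb\rangle_{\mathcal{G}}=0$ for all $\xb$, so $g=0$ and $\Hc_0$ is dense in $\mathcal{G}$. Any $g\in\mathcal{G}$ is then the norm limit of some $f_n\in\Hc_0$. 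Norm convergence implies pointwise convergence in both spaces, so $g$ is exactly the corresponding element of $\Hc$, with the same norm. Hence $\mathcal{G}=\Hc$ isometrically.
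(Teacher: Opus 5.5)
Your proof is correct. The paper gives no proof of this statement; it is quoted as background with a citation to Aronszajn (1950). Your argument is the classical construction from that reference: the pre-Hilbert space spanned by the sections $K_\xb$, a completion realized concretely as pointwise limits of $\Hc_0$-Cauchy sequences (your step (ii) is the one genuinely nontrivial check), and uniqueness via density of $\Hc_0$ in any RKHS with kernel $K$.

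Three routine points are left implicit and are worth a line each. First, the symmetry of $K$ you use for well-definedness must be read as part of the hypothesis. In the paper's real-valued setting, ``positive semidefinite matrix'' should be understood to include symmetry, and this is necessary, since every reproducing kernel is symmetric. Second, you assert but do not show that $\Hc$ is complete. The usual diagonal argument does it: approximate the $k$-th term of a Cauchy sequence in $\Hc$ by an element of $\Hc_0$ within $1/k$, and take the pointwise limit of these. Definiteness of $\|\cdot\|_\Hc$ follows from $|f_n(\xb)|\le\|f_n\|K(\xb,\xb)^{1/2}$. Third, in the uniqueness part you only show $\mathcal{G}\subseteq\Hc$ isometrically. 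The reverse inclusion holds because an $\Hc_0$-Cauchy sequence converges in the complete space $\mathcal{G}$, and its $\mathcal{G}$-limit must coincide with its pointwise limit.
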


Kernel methods have been widely used in machine learning algorithms due to their ability to model nonlinear relationship through linear operations in a feature space \cite{scholkopf2001Learning, Hofmann2008Kernel}. Specifically, kernel methods first map samples to a high-dimensional, even infinite-dimensional, feature space (Hilbert space) through a feature map $\phi$, and then apply linear models in the high-dimensional feature space. While the computations are challenging in high-dimensional feature spaces, the corresponding kernel function $K$ indeed computes the inner product in the high-dimensional feature space using samples in the low-dimensional samples space, which significantly simplifies the computation. 

We briefly introduce the formulation of kernel method in regression problems. Suppose we observe $m$ sample pairs $(\xb_i,y_i)$, $i\in[m]$, where $\xb_i$ are the inputs in a compact domain $X\subset \R^d$ and $y_i\in\R$ are the responses (or labels). 
We assume that the responses are generated according to
\begin{equation}
y_i = f^*(\xb_i) + \epsilon_i, \quad \mbox{ for all } i\in[m],
\end{equation}
where $f^*$ is the unknown target function and $\epsilon_i$ are observational errors.
Suppose that the unknown target function $f^*$ lies in a RKHS $\Hc_K$ with positive definite kernel function $K:X\times X\to\R$.
We then define the kernel ridge estimator as
\begin{equation*}
\hat{f}_\lambda = \argmin_{f\in\Hc_K} \,\,\,\sum_{i=1}^m \left(f(\xb_i) - y_i\right)^2 + \lambda\|f\|^2_\Hc,
\end{equation*}
where $\lambda>0$ is the regularization parameter. As $\lambda\to0$, the kernel ridge estimator $\hat{f}_\alpha$ converges to the kernel ridgeless estimator, which is defined as
\begin{equation*}
\hat{f} = \argmin_{f\in\Hc_K} \,\,\, \|f\|_\Hc \quad \mbox{ subject to } f(\xb_i) = y_i, \quad \mbox{ for all } i\in[m].
\end{equation*}
We let $\Kb\in\R^{m\times m}$ be the kernel matrix defined element-wise as $\Kb_{i,j} = K(\xb_i,\xb_j)$ and $\yb$ be the vectors of responses $y_1,\dots,y_m$. The kernel representer theorem \cite{Wahba1990Spline, Scholkopf2001Representer} shows that the kernel ridge(less) estimator can be written as $\sum_{i=1}^m \hat{c}_iK(x,x_i)$, where the coefficient vector $\Hat{\cb}$ of coefficients $\hat{c}_1,\dots,\hat{c}_m$ has the following closed form: 
\begin{itemize}
\item Kernel ridge regression: $\Hat{\cb} = (\Kb+\lambda \Ib_m)^{-1}\yb$,
\item Kernel ridgeless regression: $\Hat{\cb} = \Kb^{-1}\yb$.
\end{itemize}
In practice, people usually use translation-invariant (shift-invariant) kernels, which is defined in the following definition.
\begin{definition}
A positive definite kernel $K:\R^d\times \R^d\to\R$ is {\bf translation-invariant} if there exists a function $\Tilde{K}:\R^d \to\R$ such that
\begin{equation*}
K(\xb,\xb') = \Tilde{K}(\xb-\xb') \quad \mbox{ for all } \xb,\xb'\in \R^d.
\end{equation*}
\end{definition}

Some commonly used translation-invariant kernels include 
\begin{itemize}
\item Gaussian Kernel: $K(\xb,\xb') = \exp\left(-\frac{\|\xb-\xb'\|_2^2}{2\sigma^2}\right)$, where $\sigma>0$ is the scale parameter.
\item Mat{\'e}rn Kernel: $K_\nu(\xb,\xb') = \Mc_{\nu,\sigma}(||\xb-\xb'\|_2)$, where the Mat{\' e}rn covariance function $\mathcal{M}_{\nu,\sigma}(x):\R\to\R$ with smoothness parameter $\nu>0$ and scale parameter $\sigma>0$ is defined as
\begin{equation*}
\mathcal{M}_{\nu,\sigma}(x) = \frac{2^{1-\nu}}{\Gamma(\nu)} \left( \frac{x}{\sigma} \right)^\nu B_\nu\left( \frac{x}{\sigma} \right)
\end{equation*}
and $B_\nu$ is a modified Bessel function of the second kind of order $\nu$.
\item Exponential Kernel\footnote{In some literature, this kernel is also called Laplace kernel. To avoid the confusion with the Laplace kernel defined via the $\ell^1$-norm, we refer to it as the exponential kernel in this paper.}: $K(\xb,\xb') = \exp\left(-\frac{\|\xb-\xb'\|_2}{\sigma}\right)$, where $\sigma>0$ is the scale parameter. It can be viewed as a special case of Mat{\' e}rn kernel with $\nu=1/2$.
\item Laplace kernel: $K(\xb,\xb') = \exp\left(-\frac{\|\xb-\xb'\|_1}{\sigma}\right)$, where $\sigma>0$ is the scale parameter.
\end{itemize}

The RKHS associated with the Mat{\' e}rn kernel $K_\nu(\xb,\xb')$ has the inner product
\begin{equation*}
\langle f,g\rangle_{\mathcal{M}_{\nu,\sigma}} = \int_{\mathbb{R}^d} \frac{\hat{f}(w)\overline{\Hat{g}(w)}}{(1+\|w\|^2)^{\nu+d/2}} dw
\end{equation*}
up to constants, which we recognize as the inner product of the classical Sobolev space $H^{\nu+d/2}(\mathbb{R}^d)$. 
For the general Mat{\' e}rn Kernel $\mathcal{M}_{\nu,\sigma}(x)$, we have some scaling constant and the corresponding RKHS is equivalent to the classical Sobolev space $H^{\nu+d/2}(\mathbb{R}^d)$.
Therefore, the smoothness parameter in the Mat{\'e}rn covariance function $\Mc_{\nu,\sigma}(x)$ determines the smoothness of the resulting functions. 
Following the fact that the Gaussian kernel is the limits of Mat{\'e}rn kernen as smoothness parameter $\nu\to\infty$ \cite{Porcu2024Matern}, we can show that the RKHS of Gaussian kernel is $H^{\infty}(\R^d)$.
For the Laplace kernel (with scale parameter $\sigma=1$), the corresponding RKHS is the Sobolev space of dominating mixed smoothness $H^{1}_{\mix}(\R^d)$. Unlike the Isotropic Sobolev space $H^{s}(\R^d)$, the mixed smoothness Sobolev space controls the each coordinate independently and the following embedding $H^{s}_{\mix}(\R^d)\subset H^s(\R^d)$ is continuous.
\subsection{Random Feature}
The major challenge of kernel method is that it scales poorly for large amount of sample pairs. 
It requires $\Oc(m^3)$ training time and $\Oc(m^2)$ storage space, which is computationally infeasible when sample size $m$ is large.
Therefore, random feature was originally proposed to approximate large-scale kernel machines \cite{Rahimi2007RFM}.
The random feature method provides a low-rank approximation of the kernel matrix, thereby reducing the computational cost and memory requirements associated with operating on the kernel matrix. 
The theoretical foundation of random (Fourier) feature builds on Bochner's Theorem \cite{bochner1955harmonic}, a classical result from harmonic analysis.
\begin{theorem}
A continuous translation-invariant kernel $K:X\times X\to\R$ satisfying $K(\xb,\xb') = \Tilde{K}(\xb-\xb')$ is positive definite if and only if $\Tilde{K}(\omegab)$ is the Fourier transform of a non-negative measure.
\end{theorem}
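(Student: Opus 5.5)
We prove the two directions separately. Throughout, write $\Tilde{K}$ for the function of one variable with $K(\xb,\xb')=\Tilde{K}(\xb-\xb')$. We read the statement in its classical form: $\Tilde{K}(\deltab)=\int_{\R^d} e^{\irm\langle \omegab,\deltab\rangle}\,d\mu(\omegab)$ for a finite non-negative Borel measure $\mu$. When $\mu$ has a density $p(\omegab)$, this is the representation later used to sample random features.

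\emph{Sufficiency} is the easy direction. Assume $\Tilde{K}=\hat{\mu}$ in the above sense. Take any points $\xb^{(1)},\dots,\xb^{(m)}$ and any coefficients $c_1,\dots,c_m\in\C$. Then
\begin{equation*}
\sum_{i,j=1}^m c_i\overline{c_j}\,\Tilde{K}(\xb^{(i)}-\xb^{(j)})
=\int_{\R^d}\Bigl|\sum_{i=1}^m c_i e^{\irm\langle\omegab,\xb^{(i)}\rangle}\Bigr|^2 d\mu(\omegab)\ \ge 0,
\end{equation*}
because $\mu\ge0$. Continuity of $\Tilde{K}$ follows from dominated convergence, since $\mu$ is finite. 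So the kernel matrix is positive semidefinite, and $K$ is positive definite in the sense of the definition above.

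\emph{Necessity} is the substantive direction. Let $\Tilde{K}$ be continuous and positive definite. The plan has four steps.

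First, derive the elementary consequences of positive semidefiniteness. Using $2\times2$ kernel matrices we get $\Tilde{K}(-\deltab)=\overline{\Tilde{K}(\deltab)}$ and $|\Tilde{K}(\deltab)|\le\Tilde{K}(\mathbf{0})$. In particular $\Tilde{K}$ is bounded.

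Second, regularize. Set $g_\epsilon(\deltab)=\Tilde{K}(\deltab)e^{-\epsilon\|\deltab\|_2^2}$. This is a product of two positive definite functions, since the Gaussian is one, so it is positive definite by the Schur product theorem applied to the kernel matrices. Moreover $g_\epsilon$ is integrable, so its Fourier transform $\hat g_\epsilon$ is defined and continuous.

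Third, show $\hat g_\epsilon\ge0$. Approximate the positive definiteness double integral
\begin{equation*}
\frac{1}{(2\pi R)^d}\int_{[-R,R]^d}\int_{[-R,R]^d} g_\epsilon(\xb-\yb)\,e^{-\irm\langle\omegab,\xb-\yb\rangle}\,d\xb\,d\yb\ \ge0
\end{equation*}
by Riemann sums; each sum is a quadratic form in the kernel matrix and hence non-negative. Change variables to $\deltab=\xb-\yb$. The double integral becomes an integral of $g_\epsilon$ against a Fejér-type triangle weight. Letting $R\to\infty$ and using dominated convergence gives $\hat g_\epsilon(\omegab)\ge0$.

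Fourth, take the limit $\epsilon\to0$. Fourier inversion applied to $g_\epsilon$, which is legitimate once we know $\hat g_\epsilon\ge0$ (for example via a Gaussian summability argument), shows that the measures $\mu_\epsilon=\hat g_\epsilon\,d\omegab$, suitably normalized, are non-negative with total mass $g_\epsilon(\mathbf{0})=\Tilde{K}(\mathbf{0})$. So the family $\{\mu_\epsilon\}$ is uniformly bounded, and Helly's selection theorem (Banach--Alaoglu) gives a vaguely convergent subsequence with limit $\mu$.

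The main obstacle is passing to the limit without losing mass, i.e. proving tightness of $\{\mu_\epsilon\}$. We would get this from continuity of $\Tilde{K}$ at $\mathbf{0}$ by the standard Lévy-type estimate: the mass of $\mu_\epsilon$ outside the ball of radius $1/r$ is controlled by $\frac{C}{r^d}\int_{\|\deltab\|\le r}\bigl(g_\epsilon(\mathbf{0})-\mathrm{Re}\,g_\epsilon(\deltab)\bigr)\,d\deltab$, and this is small uniformly in $\epsilon$ as $r\to0$. With tightness, vague convergence upgrades to weak convergence. Since $g_\epsilon\to\Tilde{K}$ pointwise, we obtain $\Tilde{K}=\hat\mu$.

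For the real kernels used in the paper we add one observation. Because $\Tilde{K}$ is real and even, $\mu$ is symmetric. Hence $\Tilde{K}(\deltab)=\int\cos\langle\omegab,\deltab\rangle\,d\mu(\omegab)$, which is the form used to build random Fourier features.
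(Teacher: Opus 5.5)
The paper gives no proof of this statement. It quotes Bochner's theorem from the harmonic-analysis literature, so your proposal is a self-contained replacement for a citation rather than a variant of an argument in the paper.

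Your argument is the standard one, and it is correct:
\begin{itemize}
\item Sufficiency is the quadratic-form computation.
\item For necessity, Gaussian damping makes $g_\epsilon$ integrable while keeping it positive definite (Schur).
\item The Fej\'er-weighted double integral gives $\hat g_\epsilon\ge 0$.
\item Gaussian summability together with monotone convergence shows $\hat g_\epsilon\in L^1$, with total integral proportional to $\Tilde{K}(\mathbf{0})$. This licenses inversion.
\item Your tightness bound is genuinely uniform in $\epsilon\in(0,1]$. On the ball of radius $r$, the quantity $g_\epsilon(\mathbf{0})-\mathrm{Re}\,g_\epsilon$ differs from $\Tilde{K}(\mathbf{0})-\mathrm{Re}\,\Tilde{K}$ by at most $\epsilon r^2\,\Tilde{K}(\mathbf{0})$.
\end{itemize}
Your last two steps amount to reproving L\'evy's continuity theorem. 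You could simply invoke it for the measures $\mu_\epsilon$, whose characteristic functions $g_\epsilon$ converge pointwise to the continuous $\Tilde{K}$.

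The citation buys brevity. Your proof buys precision about hypotheses that the paper's phrasing leaves implicit:
\begin{itemize}
\item ``Positive definite'' has to mean positive semidefinite. The constant kernel, with $\mu=\delta_{\mathbf{0}}$, has rank-one kernel matrices.
\item You test with complex coefficients, i.e.\ Hermitian forms. This is equivalent to the real symmetric condition for the real kernels used here.
\item Positivity is tested on all of $\R^d$ rather than on a domain $X$. Your Step 3 needs cubes of arbitrarily large side, matching the paper's definition of translation invariance on $\R^d$.
\item $\mu$ is finite with total mass $\Tilde{K}(\mathbf{0})$. This is exactly the normalization behind the paper's remark that $\rho$ is a probability measure ``if the kernel is scaled properly.''
\end{itemize}
Your closing observation is also useful. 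When $\Tilde{K}$ is real, $\mu$ is symmetric, so only $\cos\langle\omegab,\cdot\rangle$ survives. That is what justifies the real cosine features with uniform random phase used later in the paper.
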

With Bochner's Theorem at hand, one can rewrite a translation-invariant kernel as an integral form 
\begin{equation}
K(\xb,\xb') = \int_{\R^d} \exp(\irm\langle \omegab,\xb-\xb'\rangle)d\rho(\omegab),
\end{equation}
where $\rho$ is a proper probability measure if the kernel is scaled properly. 
Approximating the above integral using Monte Carlo sampling yields the random Fourier feature approximation
\begin{equation*}
K(\xb,\xb') = \int_{\R^d} \exp(\irm\langle \omegab,\xb-\xb'\rangle)d\rho(\omegab) \approx \frac{1}{N}\sum_{j=1}^N \exp(\irm\langle \omegab_j,\xb-\xb'\rangle) = \langle \phi_N(\xb),\phi_N(\xb')\rangle,
\end{equation*}
where $\phi_N(\xb):X\to\R^N$ is a random finite-dimensional feature map defined as 
\begin{equation*}
\phi_N(\xb) = \frac{1}{\sqrt{N}}\Big[ \exp(\irm\langle\omegab_1,\xb\rangle), \,\,\dots\,\, , \exp(\irm\langle\omegab_N,\xb\rangle) \Big]^\top \in\C^N.
\end{equation*}
Here $\{\omegab_j\}_{j\in[N]}$ are independent samples from probability measure $\rho$. The feature map $\phi_N$ depends on random samples $\{\omegab_j\}_{j\in[N]}$, while we omit the dependency in order to simplify the notation.
To obtain a real-valued random feature approximation for kernel $K$, one can introduce a random phase $b\sim \rm{Uniform}[0,2\pi]$, yielding the standard random cosine features representation in \cite{Rahimi2007RFM}. Specifically, one can define the feature map as
\begin{equation*}
\phi_N(\xb) = \sqrt{\frac{2}{N}}\Big[ \cos(\langle\omegab_1,\xb\rangle+b_1), \,\,\dots\,\, , \cos(\langle\omegab_N,\xb\rangle+b_N) \Big]^\top \in\R^N, 
\end{equation*}
where $\{\omegab_j\}_{j\in[N]}$ are independent samples from probability measure $\rho$ and $\{b_j\}_{j\in[N]}$ are independent samples from Uniform distribution on $[0,2\pi]$.
The random feature map induces a translation-invariant kernel
\begin{equation*}
K_N(\xb,\xb') = \langle \phi_N(\xb),\phi_N(\xb')\rangle,
\end{equation*}
which converges to $K(\xb,\xb')$ as $N\to\infty$.
For later use, and with a slight abuse of notation, we introduce $\varphi(\xb;\omegab)$ denote a generic random feature associated with $\omegab$. Depending on the context, $\varphi(\xb;\omegab)$ may refer either to a complex-valued feature $\exp(\irm\langle\omegab,\xb\rangle)$ or a real-valued feature $\cos(\langle\omegab,\xb\rangle+b)$ \footnote{Precisely, we consider the augmented vectors $\widetilde{\omegab} = [\omegab,b]$ and $\widetilde{\xb}=[\xb,1]$. The generic real-valued feature is indeed $\varphi(\widetilde{\xb};\widetilde{\omegab})$. To simplify the notation, we omit the tildes when no confusion can arise.}.

Bochner's theorem establishes a one-to-one correspondence between continuous translation-invariant positive definite kernels and their spectral densities. Here, we summarize the commonly used kernels and their corresponding spectral densities.
\begin{itemize}
\setlength{\leftskip}{-0.5cm}
\item {\bf Gaussian kernel} with scale parameter $\sigma$. The corresponding spectral density is Gaussian with mean 0 and variance $1/\sigma^4$.
\item {\bf Mat{\'e}rn kernel} with smoothness parameter $\nu$ and scale parameter $\sigma$. The corresponding probability distribution is general t-distribution with degree of freedom $df=2\nu$, location parameter $\mu=0$, and scale parameter $1/(2\pi\sigma)$.
\item {\bf Laplace kernel} with scale parameter $\sigma$ corresponds to tensor-product Cauchy distribution with scale parameter $1/\sigma$, whose probability density function with scale parameter $\gamma$ is
\begin{equation*}
f(\xb;\gamma) = \left(\frac{2}{\pi}\right)^d\prod_{i=1}^d \frac{\gamma}{\gamma^2+x_j^2}.   
\end{equation*}
\end{itemize}

A modern interpretation of random feature method is viewing it as a randomized neural network. 
We recall the generic real-valued random cosine feature $\varphi(\xb,\omegab) = \cos(\langle\omegab,\xb\rangle+b)$ where $\omegab$ is sampled from the spectral distribution $\rho$ associated with the kernel and $b\sim \rm{Uniform}[0,2\pi]$.
We are interested in approximating mixture of the form $f(\xb) = \int_{\R^d}\alpha(\omegab)\varphi(\xb,\omegab)d\rho(\omegab)$ by a finite sum of the form
\begin{equation}
\label{RF:finite_sum}
\Hat{f}(\xb) = \sum_{j=1}^N \Hat{c}_j \varphi(\xb,\omegab_j) = \sum_{j=1}^N \Hat{c}_j \cos(\langle\omegab_j,\xb\rangle+b_j).    
\end{equation}
Here the parameters $\{\omegab_j\}_{j\in[N]}$ are randomly sampled and kept fixed. 
The resulting finite sum can be viewed as a single hidden layer neural network with cosine activation function. 
Unlike the vanilla neural networks, the hidden-layer weights $\omegab_j$ and bias terms $b_j$ are sampled and then fixed, whereas only the output-layer coefficients $\Hat{c}_j$ are learned.  
This randomized construction offers several advantages. First, it significantly reduces the number of trainable parameters since only the output-layer coefficients can be trained.
Second, the training process involves solving a convex optimization problem, often reduced to linear regression, thereby avoiding the difficulties in solving non-convex optimization problems. 
Despite this simplification, randomized neural networks retain strong approximation capabilities and have been successfully applied to a variety of scientific problems, including operator learning \cite{Liao2025Cauchy, FABIANI2025RandONets} and dynamical system \cite{Liao2026LR,Liao2026Koopman}. 
In later sections, we investigate their approximation properties. To this end, we first introduce the function space in which the approximation analysis will be carried out.
Let's fix a probability measure $\rho$ (associated with a kernel $K$), we define the following function space:
\begin{equation}
\label{Function_space}
\Fc(\rho) := \left\{f(\xb)=\int_{\R^d}\alpha(\omegab)\varphi(\xb,\omegab)d\rho(\omegab): \|f\|^2_{\Fc(\rho)} = \|\alpha\|^2_{L^2(\rho)} <\infty \right\}.
\end{equation}
It is well-known that the completion of $\Fc(\rho)$ is a RKHS and coincides with the RKHS associated with kernel $K$, see \cite{Rahimi2008RFM} for a complete proof. 
\subsection{Physics-Informed Neural Networks (PINNs)}
In this section, we introduce Physics-Informed Neural Networks (PINNs), which are a data-driven scientific machine learning technique for solving Partial Differential Equations (PDEs). 
The main idea of PINN is approximating PDE solutions by training a neural network to minimize a loss function including terms reflecting initial and boundary conditions and PDE residual at a set of collocation points.
Consider the following general PDE problem with both initial condition (IC) and boundary condition (BC):
\begin{subequations}
\begin{align}
& \Pc[u(\xb,t)] = 0, \quad (\xb,t)\in D \times [0,T] \label{PDE:govern} \\
& u(\xb,0) = u_0(\xb), \quad \xb\in D \label{PDE:init}\\
& \Bc[u(\xb,t)] = 0, \quad (\xb,t) \in \partial D \times[0,T] \label{PDE:bd},
\end{align}
\end{subequations}
where $u(\xb,t)$ is unknown, $D$ is the problem domain, $\xb$ is a spatial vector variable, $t$ is time, $\Pc[\cdot]$ is an interior differential operator, and $\Bc[\cdot]$ is a boundary differential operator. Equations \eqref{PDE:init} and\eqref{PDE:bd} provide initial and boundary conditions, respectively.
A PINN approximates $u(\xb,t)$ by the output of a neural network $u_\theta^\sharp(\xb,t)$ with trainable parameters $\theta$ (weights and bias).
The loss function used to train neural network parameters incorporates the physics embedded in the PDE, initial, and boundary conditions \footnotetext{In Eq~\eqref{PINN_loss}, notation $\Pc[u_\theta^\sharp(\xb_i,t_i)]$ means that we apply the differential operator $\Pc$ to function $u_\theta^\sharp$ first and we then evaluate the function values at points $(\xb_i,t_i)$. The same convention is applied to $\Bc[u_\theta^\sharp(\xb_i,t_i)]$.}:
\begin{equation}
\label{PINN_loss}
\Lc(\theta) =  \underbrace{\frac{\lambda_1}{M_r}\sum_{i=1}^{M_r} \Pc[u_\theta^\sharp(\xb_i,t_i)]^2}_{\Lc_{PDE}:\,\,\mbox{PDE redisual}} + \underbrace{\frac{\lambda_2}{M_0} \sum_{i=1}^{M_0} \left(u_\theta^\sharp(\xb,0) - u_0(\xb) \right)^2}_{\Lc_0:\,\,\,\mbox{IC residual}} + \underbrace{\frac{\lambda_3}{M_b}\sum_{i=1}^{M_b} \Bc[u_\theta^\sharp(\xb_i,t_i)] ^2 }_{\Lc_b:\,\,\, \mbox{BC residual}},
\end{equation}
where $\{(\xb_i,t_i)\}_{i=1}^{M_r}$ is a set of collocation points where the governing PDE \eqref{PDE:govern} is enforced, $\{\xb_i\}_{i=1}^{M_0}$ is a set of points in the domain to enforce the initial condition \eqref{PDE:init}, $\{\xb_i,t_i\}_{i=1}^{M_b}$ is a set of points to enforce the boundary condition \eqref{PDE:bd}, and $\lambda_1,\lambda_2,\lambda_3>0$ are pre-selected regularization parameters.
These collocation points are randomly sampled in the PDE, initial, and boundary conditions domains. 
The regularization parameters are chosen to balance PDE, IC, and BC residuals.
\section{Main Results}\label{sec:main}
In this section, we present our proposed physics-informed random feature networks and the associated computational framework, followed by theoretical results on their approximation properties.

\subsection{Physics-informed Random Feature Network}
In this section, we present our proposed physics-informed random feature network for solving PDEs. We first consider the following general PDE problem:
\begin{equation}
\label{PDE}
\begin{aligned}
\Pc[u](\xb) &= 0, && \xb\in D \\
\Bc[u](\xb) &= 0, && \xb \in \partial D, 
\end{aligned}
\end{equation}
where $D\subset\R^d$ is the domain with the boundary $\partial D$, $\Pc$ is the interior differential operator, and $\Bc$ is the boundary differential operator.
For the sake of brevity, we assume that the PDE is well-defined pointwise and has a unique strong solution throughout this paper.
We propose to solve the PDE problem \eqref{PDE} by using a random feature network.
Specifically, we model the solution to the PDE problem as a finite sum of the form \eqref{RF:finite_sum}, i.e.
\begin{equation}
\label{RF_form}
u_\cb^\sharp(\xb) = \sum_{k=1}^N c_k^\sharp \phi( \xb, \omegab_k),
\end{equation}
and then impose the physics constraints using collocation points.  
Let $\{\xb_j\}_{j\in[M]}$ be a set of collocation points such that $\{\xb_j\}_{j\in[M_D]}$ is a set of interior points and $\{\xb_j\}_{j=M_D+1}^M$ is a set of boundary points. 
We randomly draw features $\{\omegab_k\}_{k\in[N]}$ from a known distribution $\rho(\omegab)$. 
We learn the output-layer coefficients by solving the following optimization problem: 
\begin{equation}
\label{MinnormRF_PDE}
\begin{aligned}
& \minimize_{\cb\in\R^N} && \|\cb\|_2^2 \\
& \mbox{s.t.} && \Pc[u_\cb^\sharp](\xb_j) = 0, \quad \mbox{ for } j=1,\dots,M_\Omega \\
& && \Bc[u_\cb^\sharp](\xb_j) = 0, \quad \mbox{ for } j=M_\Omega+1,\dots,M
\end{aligned}
\end{equation}
For the linear PDEs, we can write the constraints in \eqref{MinnormRF_PDE} as a linear system $\Ab\cb = \yb$. Then the optimization problem \eqref{MinnormRF_PDE} reduces to a constrained min-norm optimization problem, which has explicit solution $\cb = \Ab^\dagger\yb$, where $\Ab^\dagger$ is the pseudo-inverse of matrix $\Ab$. 
For general nonlinear PDE problems, the governing PDE and boundary conditions are incorporated into the loss function as soft constraints. Specifically, we solve the following optimization problem
\begin{equation}
\label{reg_RF_PDE}
\minimize_{\cb\in\R^N} \|\cb\|_2^2 + \lambda_1 \sum_{j=1}^{M_\Omega}\left(\Pc[u_\cb^\sharp](\xb_j)\right)^2 + \lambda_2 \sum_{j=M_\Omega+1}^M \left(\Bc[u_\cb^\sharp](\xb_j)\right)^2,
\end{equation}
where $\lambda_1,\lambda_2>0$ are regularization parameters. When $\lambda_1,\lambda_2\to0$, the solution of \eqref{reg_RF_PDE} converges to the solution of \eqref{MinnormRF_PDE}. 
\begin{remark}
The loss function in \eqref{reg_RF_PDE} is slightly different from the PINN loss \eqref{PINN_loss} due to the term $\|\cb\|_2^2$. It can be viewed as a regularization term to control the model complexity. One can also remove this term in practice.
\end{remark}

We now consider the a PDE problem with both initial and boundary conditions Eq~\eqref{PDE:govern}-\eqref{PDE:bd}, and discuss how we design a random feature network to approximate its solution.
When dealing with PDEs involving both spatial and temporal variables, a standard approach is to concatenate them into a single input vector, i.e. ($\xb,t$) \cite{Raissi2019PINN, Liao2026PDE}.  
When we use the random feature network \eqref{RF_form} proposed above, it implicitly assumes that spatial and temporal variables are treated identically and the random features are sampled from the same distribution across spatial and temporal space.
However, it is generally inappropriate because space and time may have fundamentally different physical nature, interpretation, and characteristic scales. 

To alleviate this issue, we propose the following random feature network:
\begin{equation}
\label{RF_IC_BC}
u_\cb^\sharp(\xb,t) = \sum_{i=1}^{N_t}\sum_{k=1}^{N_x} c_{i,k} \varphi(\xb, \omegab_k)\psi(t,\xi_i),
\end{equation}
where $\{\omegab_k\}_{k=1}^{N_x}$ and $\{\xi_i\}_{i=1}^{N_t}$ are randomly sampled from distributions $\rho_1$ and $\rho_2$ and kept fixed. 
This idea is motivated by approximating a product kernel, which allows separated treatment of spatial and temporal components while still modeling their joint interactions.
Specifically, we define a product kernel via two translation-invariant positive definite kernels $K_1$ and $K_2$
\begin{equation*}
K((\xb,t),(\xb',t')) = K_1(\xb,\xb') K_2(t,t').
\end{equation*}
Notice that the product kernel is also translation-invariant and positive definite.
We apply random feature approximation for $K_1$ and $K_2$ separately and obtain 
\begin{equation*}
\begin{aligned}
K_1(\xb,\xb') K_2(t,t') \approx& \langle \phi_{N_x}(\xb),\phi_{N_x}(\xb')\rangle \langle\psi_{N_t}(t),\psi_{N_t}(t')\rangle \\
=& \langle \phi_{N_x}(\xb)\otimes\psi_{N_x}(t), \phi_{N_t}(\xb')\otimes\psi_{N_t}(t')\rangle,
\end{aligned}
\end{equation*}
where $\otimes$ denote the Kronecker product. 
Based on this separable random feature construction, we directly parametrize $u^\sharp(\xb,t)$ in Eq~\eqref{RF_IC_BC} as a finite sum over spatial and temporal random features. 
In practice, the spatial variable $\xb$ and the temporal variable $t$ may use different numbers of random features ($N_x$ and $N_t$) and different sampling distributions ($\rho_1$ and $\rho_2$), which makes the random feature networks more expressive and better capable capturing spatial-temporal behaviors. A numerical comparison for the product and uniform random feature models using the linear advection-diffusion equation is provided in Section~\ref{sec:LADE}.

In addition, the product formulation can also be applied to the case where the PDE variables have different scales along different spatial directions.
We may use different random features for different spatial variables to capture its behavior, i.e
\begin{equation}
u_\cb^\sharp(\xb_1,\xb_2) = \sum_{i=1}^{N_t}\sum_{k=1}^{N_x} c_{i,k} \varphi(\xb_1, \omegab_k)\psi(\xb_2,\xi_i).
\end{equation}
We refer readers to Section~\ref{sec:Helm} for a numerical example of different features for different spatial directions.

\subsection{Universal Approximation}

In this section, we show that randomized neural networks are universal approximators of continuous functions. 
Universal approximation results for this class of networks were established in \cite{FABIANI2025RandONets}. We summarize the main results below.
\begin{theorem}[Proposition 1 in \cite{FABIANI2025RandONets}]
Let $K\subset \R^d$ compact and $U \subset C(K)$ compact and consider a parametric family of random activation function $\{\varphi(\xb,\omegab):\xb\in\R^d, \omegab\in \Omega \}$, where $\omegab\in\Omega$ is a vector of randomly chosen (hyper)parameters, and assume that $\phi$ are uniformly bounded in $\R^d\times \Omega$. Let $\rho$ be a probability distribution on $\Omega$. Given any $\epsilon$, there exists a $N\in\N$ and i.i.d samples $\{\omegab_k\}_{k\in[N]}$ from $\rho$, chosen independently of $f$, such that for every $f\in U$ the random approximation
\begin{equation*}
    f_\epsilon(\xb) = \sum_{k=1}^N c_k[f]\varphi(\xb,\omegab_k),
\end{equation*}
approximates $f$ in the sense that with high probability 
\begin{equation*}
\|f - f_\epsilon\|_{L^2(\mu)} < \epsilon,
\end{equation*}
for a suitable probability measure $\mu$ over $K$. Moreover, if $\varphi(\xb,\omegab) = \psi(\xb\cdot\omegab)$, for a L-Lipschitz function $\psi$, the above approximation is uniform (i.e. in the supremum norm).
\end{theorem}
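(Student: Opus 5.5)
The plan is to reduce the statement to three ingredients: approximation of each target by an integral representation, a Monte Carlo discretization of that representation, and a compactness argument that lets one set of samples work for every $f\in U$.

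\textbf{Step 1 (integral representation).} Fix $\mu$ to be a probability measure on $K$; the normalized Lebesgue measure is the natural choice. Consider the space $\Fc(\rho)$ of functions $\int_\Omega \alpha(\omegab)\varphi(\xb,\omegab)\,d\rho(\omegab)$ with $\alpha\in L^2(\rho)$, as in \eqref{Function_space}. I will assume $\Fc(\rho)$ is dense in $C(K)$ in the supremum norm. In the cosine feature setting this holds because the completion of $\Fc(\rho)$ is the RKHS of a universal translation-invariant kernel whose spectral measure has full support. For the general family in the statement, this density is a standing hypothesis.

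\textbf{Step 2 (finite net of integral representations).} Since $U$ is compact, choose a finite $\epsilon/3$-net $f_1,\dots,f_J$ of $U$ in $C(K)$. By density, choose $g_l=\int\alpha_l\varphi\,d\rho\in\Fc(\rho)$ with $\|f_l-g_l\|_\infty<\epsilon/3$. To apply Monte Carlo cleanly, replace each $\alpha_l$ by a bounded truncation at level $A$. This costs at most $\|\varphi\|_\infty\|\alpha_l-\alpha_l\mathbbm{1}_{|\alpha_l|\le A}\|_{L^1(\rho)}$ in sup norm, which can be made as small as we like.

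\textbf{Step 3 (Monte Carlo with shared samples).} Draw i.i.d.\ $\omegab_1,\dots,\omegab_N\sim\rho$ and set $\hat g_l=\frac1N\sum_k\alpha_l(\omegab_k)\varphi(\cdot,\omegab_k)$. By independence and Fubini,
\begin{equation*}
\mathbb{E}\,\|g_l-\hat g_l\|_{L^2(\mu)}^2\le \frac{\|\varphi\|_\infty^2\|\alpha_l\|_{L^2(\rho)}^2}{N}.
\end{equation*}
Markov's inequality and a union bound over the $J$ net elements then give, for $N\gtrsim J\,\max_l\|\alpha_l\|^2_{L^2(\rho)}/(\delta\epsilon^2)$, that with probability at least $1-\delta$ every $\|g_l-\hat g_l\|_{L^2(\mu)}<\epsilon/3$. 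For $f\in U$, pick $l$ with $\|f-f_l\|_\infty<\epsilon/3$ and set $c_k[f]=\alpha_l(\omegab_k)/N$. The triangle inequality, using $\|\cdot\|_{L^2(\mu)}\le\|\cdot\|_\infty$, gives $\|f-f_\epsilon\|_{L^2(\mu)}<\epsilon$. The samples are chosen independently of $f$; the coefficients depend on $f$, as the statement allows.

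\textbf{Step 4 (uniform case).} Assume $\varphi(\xb,\omegab)=\psi(\xb\cdot\omegab)$ with $\psi$ $L$-Lipschitz. Replace the $L^2$ estimate by a bound on $\mathbb{E}\sup_{\xb\in K}|g_l-\hat g_l|$. Symmetrize and apply the Ledoux--Talagrand contraction, which uses the Lipschitz property of $\psi$ and the boundedness of $\alpha_l$ after truncation. This reduces the sup to the Rademacher complexity of the linear class $\{\omegab\mapsto\xb\cdot\omegab:\xb\in K\}$, which is $O(\mathrm{diam}(K)\sqrt{\mathbb{E}\|\omegab\|^2/N})$. If $\rho$ lacks a second moment, as for Cauchy, first truncate $\rho$ to a large ball at small cost. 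Markov's inequality and the union bound then finish as before.

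\textbf{Main obstacle.} The hardest point is Step 1. For a general random activation family, density of $\Fc(\rho)$ in $C(K)$ is not automatic and must be imported, either from universality of the induced kernel (Rahimi--Recht) or assumed as a hypothesis. Relatedly, the truncation and moment conditions in Steps 2 and 4 must be stated explicitly, since the theorem as written is somewhat informal about them.
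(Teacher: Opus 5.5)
Your argument is correct once the density hypothesis you isolate in Step 1 is added, and the paper's sketch uses that hypothesis implicitly too. It is essentially the paper's \emph{alternative} route: universality of the induced kernel gives density of $\Fc(\rho)$ in $C(K)$, and random features then approximate elements of $\Fc(\rho)$. It is not the primary route attributed to \cite{FABIANI2025RandONets}, which obtains density from a chain of classical theorems. There, sigmoidal shallow networks are universal, sigmoids are approximated by RBF networks so RBF networks are universal, and only then are finite RBF networks approximated by random features. That last step is the same Monte Carlo estimate as your Step 3, and your Step 4 is the Rahimi--Recht sup-norm argument via symmetrization and contraction. The chain derives density from standard universality theorems rather than assuming it. However, it still needs the RBF units to be expressible through mixtures of the given features, so it is no more general than your version. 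Your version gains explicitness in three ways. You state the non-degeneracy hypothesis the literal statement silently needs; the uniform claim is false, for instance, when $\psi$ is constant. Your $L^2$ bound holds for every probability measure $\mu$ on $K$, not just a suitable one. And your finite-net plus union-bound argument yields a single draw $\omegab_1,\dots,\omegab_N$ that works simultaneously for all $f\in U$, which the paper's sketch never addresses. The price is the crude size $N\gtrsim J/(\delta\epsilon^2)$ from Markov's inequality, which is harmless for a qualitative universality result.

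A few details should be tightened.
\begin{itemize}
\item The Rademacher bound for $\{\omegab\mapsto\xb\cdot\omegab:\xb\in K\}$ should involve $\max_{\xb\in K}\|\xb\|_2$, not $\mathrm{diam}(K)$, because $\xb\cdot\omegab$ is not translation invariant in $\xb$.
\item The absolute-value form of the contraction principle requires $\psi(0)=0$. Split off the constant term $\psi(0)$, which contributes $O(A|\psi(0)|/\sqrt{N})$.
\item The samples must still be drawn from $\rho$, so ``truncating $\rho$'' should mean replacing $\alpha_l$ by $\alpha_l\indicator_{\|\omegab\|_2\le R}$. The linear-class bound is then applied to $\omegab_k\indicator_{\|\omegab_k\|_2\le R}$.
\item The truncation cost in Step 2 has to fit inside the $\epsilon/3$ allotted to density, for example $\epsilon/6$ for density and $\epsilon/6$ for truncation.
\end{itemize}
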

\begin{proof}[Sketch of Proof]
The proof in \cite{FABIANI2025RandONets} follows three steps: first, it applies universal approximation results of vanilla shallow neural networks with a continuous sigmoidal (non-polynomial) activation function; second, it uses radial basis function (RBF) networks to approximate the continuous sigmoidal activation function, and hence shows that RBF networks are universal approximators; third, it shows that randomized neural networks can approximate RBF networks, and then randomized neural networks are universal approximators of continuous functions.

Alternatively, we can apply universal kernels \cite{Micchelli2006Universal} to show that their corresponding RKHS is dense in $C(K)$, and then show that randomized neural networks (random features models) can approximate any functions in the RKHS. Therefore, randomized neural networks are universal approximators of continuous functions.  
\end{proof}
We mainly focus on Gaussian and Mat{\'e}rn kernels in the numerical experiments, both of which are universal kernels. The corresponding approximation results are special cases of a more general theory for translation-invariant kernels. We refer the reader to Section~4 in \cite{Micchelli2006Universal} for more details.

\subsection{Approximation Error}

In this section, we study the approximation of RKHS functions using random features and establish bounds on the corresponding approximation error. Specifically, we first state the results of approximation error in $L^\infty$ and $L^2$ norms developed in \cite{Liao2026PDE}. We then derive the bounds on the approximation error in the Sobolev $H^1$ norm. 

\begin{theorem}[Theorem 3 in \cite{Liao2026PDE}]
\label{Thm:L2}
Let $f$ be a function from $\Fc(\rho)$. Suppose that the random feature map $\phi$ satisfies $|\varphi(\xb,\omegab)|\leq1$ for all $\xb\in X$ and $\omegab\in\R^d$. Then for any $\delta\in(0,1)$, there exists $c^\sharp_1,\dots,c^\sharp_N$ so that the function
\begin{equation}
\label{RF_approximator}
f^\sharp(\xb) = \sum_{k=1}^N c^\sharp_k\varphi(\xb,\omegab_k) 
\end{equation}
satisfies
\begin{equation*}
\left| f(\xb) - f^\sharp(\xb)\right| \leq \frac{12\|f\|_{\Fc(\rho)}\log(2/\delta)}{\sqrt{N}}     
\end{equation*}
with probability at least $1-\delta$ over $\omegab_1,\dots,\omegab_N$ drawn i.i.d from $\rho(\omegab)$.
\end{theorem}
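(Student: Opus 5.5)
The plan is a Monte Carlo importance-sampling construction followed by a concentration bound for a bounded (or sub-exponential) i.i.d.\ average. Write $f(\xb)=\int \alpha(\omegab)\varphi(\xb,\omegab)\,d\rho(\omegab)$ with $\|\alpha\|_{L^2(\rho)}=\|f\|_{\Fc(\rho)}$. Take the coefficients
\begin{equation*}
c^\sharp_k=\frac{1}{N}\alpha(\omegab_k),
\end{equation*}
so that $f^\sharp(\xb)=\frac1N\sum_{k=1}^N Z_k(\xb)$ with $Z_k(\xb)=\alpha(\omegab_k)\varphi(\xb,\omegab_k)$. These $Z_k$ are i.i.d.\ with $\mathbb{E}Z_k(\xb)=f(\xb)$. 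The quantity to control is therefore the deviation of an empirical mean of i.i.d.\ random variables from its expectation, at a fixed $\xb$ (the statement is pointwise; a uniform-in-$\xb$ version would require an additional covering argument, which I would not pursue).

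For the variance, the bound $|\varphi|\le 1$ gives $\mathbb{E}|Z_k(\xb)|^2\le \|\alpha\|_{L^2(\rho)}^2=\|f\|_{\Fc(\rho)}^2$. Chebyshev's inequality would then already give an error of $\|f\|_{\Fc(\rho)}/\sqrt{N\delta}$. To reach the stated $\log(2/\delta)$ dependence I need a Bernstein-type tail, and this is the main obstacle: $\alpha$ is only in $L^2(\rho)$, so $Z_k$ need not be bounded.

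My first attempt would follow the route of \cite{Liao2026PDE} and \cite{Rahimi2008RFM}, where this type of result is obtained by working with the Hilbert-space-valued random element $Z_k$ in $L^2(\mu)$. A vector Bernstein/McDiarmid inequality, or the Pinelis inequality for Hilbert-space martingales, yields bounds of the form
\begin{equation*}
\frac{\|f\|(1+\sqrt{2\log(1/\delta)})}{\sqrt N}.
\end{equation*}
Since $1+\sqrt{2\log(1/\delta)}\le 12\log(2/\delta)$ for $\delta\in(0,1)$, a bound of this form is weaker than the target only by absolute constants and would suffice. However, McDiarmid's inequality needs bounded differences, and these are unavailable without an $L^\infty$ bound on $\alpha$.

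If only $\|\alpha\|_{L^2}$ is available, I would fall back on a median-of-means or truncation device. One option is to split the $N$ samples into $\lceil 8\log(2/\delta)\rceil$ blocks, apply Chebyshev's inequality with constant failure probability $1/4$ in each block, and boost the success probability via a Hoeffding bound on the number of good blocks. Because the conclusion only asserts existence of some $c^\sharp$ built from the samples, I may choose $c^\sharp$ to realize a suitable block combination of the Monte Carlo estimators, for instance block averages in which only good blocks receive weight; a careful choice can give $\|f\|\sqrt{\log(2/\delta)/N}$ up to constants, which is dominated by the stated $12\|f\|_{\Fc(\rho)}\log(2/\delta)/\sqrt N$. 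I expect the tuning of this step to give exactly $12\log(2/\delta)$ to be the fiddly part, and I would simply track the constants through the Chebyshev threshold $2\|f\|/\sqrt{N/B}$ with $B$ the number of blocks.
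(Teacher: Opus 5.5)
There is a genuine gap. The device you actually develop makes the coefficients depend on the point $\xb$. That device is median-of-means with weight only on ``good'' blocks, and a block is good when its mean lies within the Chebyshev threshold of $f(\xb)$. So $c^\sharp_1,\dots,c^\sharp_N$ depend on $\xb$ and on the unknown value $f(\xb)$. The plain median of the block means has the same defect, since which block is the median changes with $\xb$.

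With $\xb$-dependent coefficients the pointwise claim is vacuous. Whenever $\varphi(\xb,\omegab_1)\neq 0$ (almost surely for cosine features), the choice $c^\sharp_1=f(\xb)/\varphi(\xb,\omegab_1)$ and $c^\sharp_k=0$ for $k\ge 2$ gives zero error. Relatedly, if you may look at $f(\xb)$ to pick good blocks, one good block suffices and the Hoeffding step is superfluous.

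What the theorem must deliver is a single random function whose coefficients depend on $\omegab_1,\dots,\omegab_N$ and $f$, but not on $\xb$. This is also what the paper uses right afterwards: it integrates over $X$ to get the $L^2(X)$ bound, and it reuses the same $f^\sharp$ in Theorem~\ref{Thm:H1}. With the $\xb$-independent choice $\alpha(\omegab_k)/N$, you remain stuck at the Chebyshev rate $\|f\|_{\Fc(\rho)}/\sqrt{N\delta}$, as you observed yourself.

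The missing idea is the truncation you mention only in passing, and it is the paper's route. Put $\alpha_{\le T}=\alpha\,\indicator_{|\alpha|\le T}$ and $c^\sharp_k=\alpha_{\le T}(\omegab_k)/N$. The bias is deterministic and needs only the $L^2$ assumption:
\[
|f(\xb)-\Eb f^\sharp(\xb)|\le \Eb\bigl[|\alpha(\omegab)|\,\indicator_{|\alpha(\omegab)|>T}\bigr]\le \frac{\Eb|\alpha(\omegab)|^2}{T}=\frac{\|f\|_{\Fc(\rho)}^2}{T}.
\]
The summands $\alpha_{\le T}(\omegab_k)\varphi(\xb,\omegab_k)$ are now bounded by $T$, with variance at most $\|f\|_{\Fc(\rho)}^2$. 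Lemma~\ref{lemma:bound} and Theorem~\ref{Thm:Bern}, with $b=2T$ and $\sigma=\|f\|_{\Fc(\rho)}$, then bound the fluctuation by $4T\log(2/\delta)/N+\|f\|_{\Fc(\rho)}\sqrt{2\log(2/\delta)/N}$.

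Taking $T=\sqrt N\|f\|_{\Fc(\rho)}$ gives a total of $\bigl(1+4\log(2/\delta)+\sqrt{2\log(2/\delta)}\bigr)\|f\|_{\Fc(\rho)}/\sqrt N$. This is at most $12\|f\|_{\Fc(\rho)}\log(2/\delta)/\sqrt N$ because $\log(2/\delta)\ge\log 2$. The linear dependence on $\log(2/\delta)$ is exactly the range term $2b\log(2/\delta)/N$ with $b=2\sqrt N\|f\|_{\Fc(\rho)}$, so no block tuning is needed.

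The same truncated summands can also be viewed as $L^2(X)$-valued elements of norm at most $T\sqrt{|X|}$. They then fit the Hilbert-space form of Theorem~\ref{Thm:Bern}, which is what makes the $L^2(X)$ corollary legitimate.

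If you prefer a median-of-means argument, the version with $\xb$-independent coefficients is the geometric median of the block-mean functions in $L^2(\mu)$, which lies in their convex hull. It controls only the $L^2(\mu)$ error, not the pointwise one.
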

\begin{proof}[Sketch of Proof]
We outline the main idea of the proof. The proof is constructive. We first construct a random feature approximation $f^\sharp$ associated with the target function $f$. Since the true function is generally unknown to us, this construction is used for analysis and is not intended to be implemented directly. We then apply concentration inequalities to obtain the desired results.  
\end{proof}

Consequently, on any domain $X$ of finite Lebesgue measure $|X|$, the above uniform error bound immediately implies an $L^2$ error bound of the same order, i.e. 
\begin{equation*}
\|f-f^\sharp\|_{L^2(X)} \leq \frac{12\|f\|_{\Fc(\rho)}\log(2/\delta)\sqrt{|X|}}{\sqrt{N}}.    
\end{equation*}

With the $L^2$ error bound established, it remains to bound $\|\nabla(f-f^\sharp)\|_{L^2}$. Combining the resulting gradient estimate with the $L^2$ error bound yields an approximation error bound in $H^1$ norm. The main result will be summarized in Theorem~\ref{Thm:H1}.
Before we state the theorem, we introduce another function space consisting of functions whose regularity is controlled by the $L^2$ norm of the feature gradients. 
Specifically, let
\begin{equation*}
M^2:=\int_{\R^d} |\alpha(\omegab)|^2 \|\nabla\varphi(\xb,\omegab)\|_{L^2}^2d\rho(\omegab),
\end{equation*}
we then define the function space as
\begin{equation*}
\widetilde{\Fc}(\rho) = \left\{ f(\xb)=\int_{\R^d}\alpha(\omegab)\varphi(\xb,\omegab)d\rho(\omegab) \,\Big| \, M^2<\infty  \right\}.
\end{equation*}

\begin{theorem}[Approximation Error in $H^1$ norm]
\label{Thm:H1}
Let $f$ be a function from $\Fc(\rho)\cap\widetilde{\Fc}(\rho)$. Suppose that the random feature map $\phi$ satisfies $|\varphi(\xb,\omegab)|\leq1$ for all $\xb\in X$ and $\omegab\in\R^d$. 
We define a set 
\begin{equation*}
\epsilon_N(\delta) = \Big\{ \max_{k\in [N]} \max_{i\in[d]} \sup_{\xb\in X} \left| \frac{\partial \varphi(\xb, \omegab_k)}{\partial x_i} \right| \leq B_N(\delta) \Big\},
\end{equation*}
where $B_N(\delta)>0$ depends on $N$ and $\delta$. We assume that $\Pbb(\epsilon_N(\delta)) \geq 1-\delta/2$.
Then for any $\delta\in(0,1)$, there exists $c^\sharp_1,\dots,c^\sharp_N$ so that the function
\begin{equation}
f^\sharp(\xb) = \sum_{k=1}^N c^\sharp_k\varphi(\xb,\omegab_k) 
\end{equation}
satisfies
\begin{equation*}
\| f(\xb) - f^\sharp(\xb)\|_{H^1(X)} \leq \frac{12\|f\|_{\Fc(\rho)}\log(2/\delta)\sqrt{|X|}}{\sqrt{N}} + \frac{12MB_N(\delta)\|f\|_\Fc(\rho)\log(2/\delta)\sqrt{|X|}}{\sqrt{N}}
\end{equation*}
with probability at least $1-\delta$ over $\omegab_1,\dots,\omegab_N$ drawn i.i.d from $\rho(\omegab)$ and over random event $\epsilon_N(\delta)$.
\end{theorem}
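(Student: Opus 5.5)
We will use the same constructive random feature approximant as in the proof of Theorem~\ref{Thm:L2}, namely
\begin{equation*}
f^\sharp(\xb) = \frac{1}{N}\sum_{k=1}^N \alpha(\omegab_k)\varphi(\xb,\omegab_k), \qquad c^\sharp_k = \frac{\alpha(\omegab_k)}{N},
\end{equation*}
where $\alpha$ is the density representing $f\in\Fc(\rho)\cap\widetilde{\Fc}(\rho)$. The $H^1$ norm splits as $\|f-f^\sharp\|_{H^1}\le \|f-f^\sharp\|_{L^2}+\|\nabla(f-f^\sharp)\|_{L^2}$. The first term is controlled by Theorem~\ref{Thm:L2} together with the $L^2$ consequence stated after it. We apply that result at confidence level $\delta/2$ and absorb the change of constant into $\log(2/\delta)$ up to the displayed constants. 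This gives the first summand on an event of probability at least $1-\delta/2$.

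For the gradient term, first differentiate under the integral sign, which is justified since $M<\infty$. This gives $\nabla f(\xb)=\int \alpha(\omegab)\nabla\varphi(\xb,\omegab)\,d\rho(\omegab)$. Hence each partial derivative $\partial_i f$ is the expectation of the i.i.d.\ random functions $Z_k(\xb)=\alpha(\omegab_k)\partial_i\varphi(\xb,\omegab_k)$, and $\partial_i f^\sharp$ is their empirical mean. We then repeat the concentration argument of Theorem~\ref{Thm:L2} on the event $\epsilon_N(\delta)$. There the random summands satisfy $|\partial_i\varphi(\xb,\omegab_k)|\le B_N(\delta)$, so $\alpha(\omegab_k)\partial_i\varphi(\xb,\omegab_k)/B_N(\delta)$ plays the role of $\alpha(\omegab_k)\varphi(\xb,\omegab_k)$ with a feature bounded by $1$.

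The cleanest route is to work in the Hilbert space $L^2(X;\R^d)$. The vectors $V_k=\alpha(\omegab_k)\nabla\varphi(\cdot,\omegab_k)$ are i.i.d.\ with mean $\nabla f$ and second moment $\mathbb{E}\|V_k\|^2_{L^2}=M^2$. A Hilbert-space McDiarmid or Pinelis-type bound then gives
\begin{equation*}
\Big\|\frac1N\sum_k V_k-\nabla f\Big\|_{L^2}\lesssim \frac{\log(2/\delta)}{\sqrt N}\cdot(\text{scale}).
\end{equation*}
The scale comes from combining $M$ with the truncation constant $B_N(\delta)\sqrt{|X|}$ and with $\|\alpha\|_{L^2(\rho)}=\|f\|_{\Fc(\rho)}$. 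The exact way these factors multiply will be arranged to reproduce the product $M B_N(\delta)\|f\|_{\Fc(\rho)}\sqrt{|X|}$, using the crude bounds $\|V_k\|_{L^2}\le |\alpha(\omegab_k)|B_N(\delta)\sqrt{d|X|}$ for the bounded-differences part and $M$ for the variance part. Finally, a union bound over the $L^2$ event, the gradient event and $\epsilon_N(\delta)$ (probability $\ge 1-\delta/2$) gives total probability at least $1-\delta$, after rescaling $\delta$ inside the concentration steps.

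The main obstacle is that $\alpha(\omegab_k)$ is not uniformly bounded, so bounded-difference concentration does not apply directly. Moreover, $\epsilon_N(\delta)$ is a data-dependent conditioning event, so the $\omegab_k$ are no longer i.i.d.\ once we condition on it. I would first handle the unboundedness the way Theorem~\ref{Thm:L2} presumably does, via a truncation of $\alpha$ or a Bernstein-type inequality with the $L^2(\rho)$ moment. I would handle the conditioning by not conditioning at all: apply concentration to the truncated variables $V_k\mathbbm{1}\{\sup|\nabla\varphi(\cdot,\omegab_k)|\le B_N(\delta)\}$, which are i.i.d. On $\epsilon_N(\delta)$ they coincide with $V_k$. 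The extra bias from truncation can be bounded by $M$ via Cauchy--Schwarz and absorbed into the constant, at the cost of some looseness in the numerical factor $12$.
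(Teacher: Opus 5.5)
The gap is the step you leave open, namely that ``the exact way these factors multiply will be arranged to reproduce the product $MB_N(\delta)\|f\|_{\Fc(\rho)}\sqrt{|X|}$.'' Your ingredients do not produce a product. Carry out your plan: truncate at $T=\sqrt N\|f\|_{\Fc(\rho)}$ and set $W_k=\alpha_{\le T}(\omegab_k)\nabla\varphi(\cdot,\omegab_k)\indicator_{G_k}$, where $G_k$ is the event that $|\partial_i\varphi(\xb,\omegab_k)|\le B_N(\delta)$ for all $i$ and all $\xb\in X$. Then $\|W_k\|_{L^2(X;\R^d)}\le TB_N(\delta)\sqrt{d|X|}$ and $\Eb\|W_k\|^2\le M^2$. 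Lemma~\ref{lemma:bound} and Theorem~\ref{Thm:Bern} then give, with probability at least $1-\delta'$,
\[
\Big\|\frac1N\sum_{k=1}^N W_k-\Eb W\Big\|_{L^2(X;\R^d)}\le\frac{4\|f\|_{\Fc(\rho)}B_N(\delta)\sqrt{d|X|}\,\log(2/\delta')}{\sqrt N}+M\sqrt{\frac{2\log(2/\delta')}{N}} .
\]
This is a sum: the $B_N$-term carries no factor $M$, and the $M$-term no factor $B_N$. Dominating it by $12MB_N(\delta)\|f\|_{\Fc(\rho)}\log(2/\delta)\sqrt{|X|}/\sqrt N$ needs extra lower bounds, roughly $M\ge\sqrt d$ and $B_N(\delta)\|f\|_{\Fc(\rho)}\sqrt{|X|\log(2/\delta)}\ge 1$. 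Neither is a hypothesis. The paper uses the same truncation $T=\sqrt N\|f\|_{\Fc(\rho)}$, but applies a scalar Bernstein bound to each $\partial_i$ at each fixed $\xb$. It reaches the product only by rewriting $4TB_N(\delta)\log(2/\delta)/N$ as $4M_i\|f\|_{\Fc(\rho)}B_N(\delta)\log(2/\delta)/\sqrt N$, a factor $M_i$ that Bernstein does not supply, and then merging three terms into one under tacit size assumptions. So this is not bookkeeping you can arrange. What your argument honestly yields is an additive bound of the same order $N^{-1/2}\log(1/\delta)$, and that is what you should state.

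Two further steps need repair. First, the bias created by your indicators must be $O(N^{-1/2})$, not merely ``bounded by $M$''. Since $B_N(\delta)$ is deterministic, $\epsilon_N(\delta)=\bigcap_kG_k$ with independent $G_k$. Hence $\Pbb(G_1)^N\ge1-\delta/2$, so $\Pbb(G_1^c)\le-\log(1-\delta/2)/N\le\delta/N$. Combined with $\Pbb(|\alpha|>T)\le1/N$ (Chebyshev), Cauchy--Schwarz gives $\|\nabla f-\Eb W\|_{L^2(X;\R^d)}\le M\sqrt{(1+\delta)/N}$. For the same reason, conditioning on $\epsilon_N(\delta)$ does not destroy independence; it changes the law of each $\omegab_k$, and that mean shift is exactly this bias. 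Second, your opening choice $c^\sharp_k=\alpha(\omegab_k)/N$ must become the truncated $\alpha_{\le T}(\omegab_k)/N$. Theorem~\ref{Thm:L2} is an existence statement, so its $L^2$ bound transfers only to the same construction, as the paper's remark after the proof notes. Alternatively, run your Hilbert-space Bernstein bound in $L^2(X)$ for $f-f^\sharp$ as well. The union bound over three events then costs the constant $12$, as you anticipate.

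With these repairs your route is sounder than the paper's. Concentrating directly in $L^2(X;\R^d)$ gives the gradient bound on a single event, whereas the paper integrates over $X$ a bound that holds with high probability only for each fixed $\xb$. Your indicators also make explicit the bias that the paper ignores when it applies the bounded-case Bernstein inequality on $\epsilon_N(\delta)$.
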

\begin{proof}
We first introduce notation $\alpha_{\leq T}(\omegab) = \alpha(\omegab)\indicator_{|\alpha(\omegab)|\leq T}$ and $\alpha_{>T} = \alpha(\omegab) - \alpha_{\leq T}(\omegab)$ for any $T>0$.
We draw random features $\{\omegab\}_{k=1}^N$ from $\rho(\omegab)$. We then define $c_k^\sharp = \alpha_{\leq T}(\omegab_k) / N$ for all $k\in[N]$ and a random feature approximation as
\begin{equation*}
f^\sharp(\xb) = \frac{1}{N}\sum_{k=1}^N  \alpha_{\leq T}(\omegab_k) \varphi(\xb,\omegab).
\end{equation*}
For each component $x_i$ of $\xb\in\R^d$, we decompose the error into two terms, i.e.
\begin{equation*}
\left| \frac{\partial(f-f^\sharp)}{\partial x_i}\right| \leq \underbrace{\left| \frac{\partial(f-g)}{\partial x_i}\right|}_{I_1} + \underbrace{\left| \frac{\partial(g-f^\sharp)}{\partial x_i}\right|}_{I_2},
\end{equation*}
where $g=\Eb_{\omegab} f^\sharp$. Moreover, we notice that $f-g = \Eb_{\omegab} \alpha_{>T}(\omegab)\varphi(\xb,\omegab)$.

We first bound term $I_1$, which can be written as
\begin{equation*}
I_1 = \left| \Eb_{\omegab} \alpha_{>T}(\omegab) \frac{\partial \varphi(\xb,\omegab)}{\partial x_i}\right| = \left|\int_{\R^d} \alpha_{>T}(\omegab) \frac{\partial \varphi(\xb,\omegab)}{\partial x_i} d\rho(\omegab) \right|.
\end{equation*}
Then, we apply Cauchy-Schwarz inequality to obtain 
\begin{equation*}
I_1 \leq \underbrace{\left[ \int_{\R^d} \left|\alpha(\omegab) \frac{\partial \varphi(\xb,\omegab)}{\partial x_i} \right|^2 d\rho(\omegab) \right]^{1/2}}_{M_i} \left[ \int_{\R^d} \indicator_{|\alpha(\omegab)|\geq T}^2d\rho(\omegab) \right]^{1/2}.
\end{equation*}
Finally, applying Markov's inequality yields
\begin{equation*}
I_1 \leq M_i \frac{\Eb_{\omegab}|\alpha(\omegab)|^2}{T} = \frac{M_i \|f\|^2_{\Fc(\rho)}}{T}.
\end{equation*}

Next, we bound term $I_2$. By assumption, we have
\begin{equation*}
\Pbb(\epsilon_N(\delta)) \geq 1-\delta/2.
\end{equation*}
For any $\xb\in X$, we define random variable $Z(\omegab) = \alpha_{\geq T}(\omegab)\frac{\partial \varphi(\xb,\omegab)}{\partial x_i}$ and let $Z_1,\dots, Z_N$ be N i.i.d copies of $Z(\omegab)$ defined by $Z_k = Z(\omegab_k)$ for $k\in[N]$.
On the event $\epsilon_N(\delta)$, we have, with probability at least $1-\delta/2$,
\begin{equation*}
|Z_k| \leq T B_N(\delta).
\end{equation*}
The variance of $Z$ is bounded above as
\begin{equation*}
\Eb_{\omegab} |Z - \Eb_{\omegab} Z|^2 \leq \Eb_{\omegab} |Z|^2 \leq M_i^2.
\end{equation*}
By Lemma~\ref{lemma:bound} and Theorem~\ref{Thm:Bern}, it holds that, 
\begin{equation*}
I_2 = \left|\frac{1}{N}\sum_{k=1}^N Z_k - \Eb_{\omegab}Z\right|\leq \frac{4TB_N(\delta)\log(2/\delta)}{N} + \sqrt{\frac{2M_i^2\log(2/\delta)}{N}}.
\end{equation*}
Choosing $T=\sqrt{N}\|f\|_{\Fc(\rho)}$ and adding the bounds of $I_1$ and $I_2$ give
\begin{equation*}
\begin{aligned}
\left| \frac{\partial(f-f^\sharp)}{\partial x_i}\right| \leq &\frac{M_i\|f\|_{\Fc(\rho)}}{\sqrt{N}} + \frac{4M_i\|f\|_{\Fc(\rho)}B_N(\delta)\log(2/\delta)}{\sqrt{N}} + \frac{\sqrt{2\log(2/\delta)}M_i}{\sqrt{N}} \\
\leq& \frac{12M_iB_N(\delta)\|f\|_{\Fc(\rho)}\log(2/\delta)}{\sqrt{N}}.
\end{aligned}
\end{equation*}
Then we immediately obtain the following bound since $X$ has finite Lebesgue measure and we have the relation $M^2 = \sum_{i=1}^dM^2_i$ by the definitions of $M$ and $M_i$, i.e.
\begin{equation}
\label{grad_L2}
\|\nabla(f-f^\sharp)\|_{L^2(X)} = \sqrt{ \sum_{i=1}^d \left\| \frac{\partial (f-f^\sharp)}{\partial x_i} \right\|^2_{L^2(X)} } \leq \frac{12MB_N(\delta)\|f\|_\Fc(\rho)\log(2/\delta)\sqrt{|X|}}{\sqrt{N}}.
\end{equation}
Combining with the $L^2$ error bound in Theorem \ref{Thm:L2} leads to the desired bound in $H^1$ norm.
\end{proof}

We end this section with a few remarks. 
\begin{remark}
Consider the following 1D example where $f(x)$ is represented by its Fourier transform. Then, we have 
\begin{equation*}
\alpha(\omega) = \frac{\Hat{f}(\omega)}{\rho(\omega)},  \,\,\,\, \varphi(x,\omega)=\exp(ix\omega), \,\,\, \mbox{ and } M^2 = \int_{\R} \frac{|\hat{f}(\omega)|^2}{\rho(\omega)}d\omega.
\end{equation*}
Therefore, we show that the Fourier transform is controlled by density function of random feature. If we further take Cauchy distribution with density function $\rho(\omega) = \frac{1}{1+\omega^2}$, then we observe that $M$ is equivalent to $H^1$ norm.
The norm $M$ appearing in the definition of space $\widetilde{\Fc}(\rho)$ quantifies the regularity of functions and controls the approximation error. 
This approximation result has the same flavor as classical error estimates for finite element method (FEM): controlling the approximation error in $H^1$ norm requires a higher-order regularity assumption on the target function, typically the $H^2$ norm is finite.
\end{remark}

\begin{remark}
In the proof of Theorem \ref{Thm:H1}, we construct a random feature approximation $f^\sharp$ and select a positive constant $T=\sqrt{N}\|f\|_{\Fc(\rho)}$, which are the same those used in the proof of Theorem \ref{Thm:L2}. Therefore, we can use the results of Theorem \ref{Thm:L2} directly.
\end{remark}

\begin{remark}
In the theorem statement, we define a good event $\epsilon_N(\delta)$ such that the partial derivative of random feature is bounded with probability at least $1-\delta/2$.  
Good event $\epsilon_N(\delta)$ and quantity $B_N(\delta)$ depend on the choice of random features. 
Here, we derive an explicit $B_N(\delta)$ for Gaussian distribution $\Nc(0,\sigma^2)$ because we mainly use Gaussian distribution in our numerical experiments. 
Suppose that $\omega_i$ follows Gaussian distribution $\Nc(0,\sigma^2)$, then we have the tail bound 
\begin{equation*}
\Pbb(|\omega_i|\geq B) \leq 2\exp\left(-\frac{B^2}{2\sigma^2}\right).
\end{equation*}
Applying the union bound gives
\begin{equation*}
\Pbb(\max_{k\in[N]} \max_{i\in[d]} |\omega_{k,i}|\leq B) \geq 1-2Nd\exp\left(-\frac{B^2}{2\sigma^2}\right).
\end{equation*}
Since we assume that the failure probability is at most $\delta/2$, then we have
\begin{equation*}
B_N(\delta) = \sigma\sqrt{2\log\left(\frac{4Nd}{\delta}\right)}.
\end{equation*}
\end{remark}
\section{Numerical Experiments}\label{sec:numerics}
In this section, we evaluate the performance of the proposed physics-informed random feature network (PIRFN) model on several PDE problems. 
First, we numerically verify the approximation error decay rates derived in Theorems \ref{Thm:L2} and \ref{Thm:H1}. 
We then compare our proposed PIRFN model defined in \eqref{RF_IC_BC} with the uniform random feature model proposed in \cite{Liao2026PDE} to demonstrate its superiority in approximating PDE solutions, especially when spatio-temporal variables have different scales.
Finally, we compare with some state-of-the-art methods, including Physics-informed Neural Network (PINNs), self-adaptive Physics-informed Neural Network (SA-PINN), and extreme learning machine (ELM), across several benchmark PDE problems.
For all time-dependent numerical examples, we consider the random Fourier features of the form \eqref{RF_IC_BC} with cosine activation function for both spatial and temporal variables. The weights for both of them are independently sampled form Gaussian distributions and the choice of variances will be specified for each PDE problem. 
All source code will be uploaded to a Github repository and available upon completion of the revision.

\subsection{Measure of Accuracy}
We take several metrics to measure the model accuracy.
For the numerical verification of decay rate in Section \ref{sec:decay}, we consider $L^2$ error $\|u-\hat{u}\|_{L^2}$ and $H^1$ error $\|u-\hat{u}\|_{H^1}$ between true solution $u$ and the random feature approximation $\hat{u}$. 
Since we cannot evaluate the norms exactly, we use the Monte Carlo method to estimate them on a withheld set of discrete points $\{x_i\}_{i=1}^M$, see formulas below
\begin{equation*}
\begin{aligned}
\| u -\hat{u}\|_{L^2} &\approx \sqrt{\frac{1}{M}\sum_{i=1}^{M}\left|u(x_i)-\hat{u}(x_i)\right|^2}, \quad \text{and} \\
\| u -\hat{u}\|_{H^1} &\approx \sqrt{
\frac{1}{M}
\sum_{i=1}^{M}
\left[
\left|u(x_i) - \hat{u}(x_i)\right|^2
+
\left\|\nabla u(x_i) - \nabla\hat{u}(x_i)\right\|^2
\right]}.
\end{aligned}
\end{equation*}
For the comparison with other methods in Sections~\ref{sec:LADE}--\ref{sec:wave}, we assess the accuracy of each method using relative $L^2$ error and $L^\infty$ error. Similarly, we evaluated the errors using Monte Carlo method on a withheld set of test points $\{x_i\}_{i=1}^M$. The discrete formulas are given below
\begin{equation*}
\frac{\|u-\hat{u}\|_{L^2}}{\|u\|_{L^2}} \approx \frac{
\sqrt{\sum_{i=1}^{N}\left(\hat{u}(x_i)-u(x_i)\right)^2}}{\sqrt{\sum_{i=1}^{N}u(x_i)^2}}, \quad \|u-\hat{u}\|_{L^\infty} \approx \max_{i\in[1:M]}|u(x_i)-\hat{u}(x_i)|.
\end{equation*}
\subsection{Training Strategy}
For our proposed random feature-based method, the trainable coefficients are optimized using the Adam and L-BFGS optimizers implemented in PyTorch. For the Adam optimizer, we include a regularization term involving the norm of the coefficients in the loss function, as defined in \eqref{reg_RF_PDE}. In contrast, when using the L-BFGS optimizer, we minimize only the PDE residual loss without the additional regularization term. Based on our numerical experiments, these choices lead to better performance for the corresponding optimization methods. For simplicity, we set the regularization parameters $\lambda_1$ and $\lambda_2$ to one.

For the comparison methods, including PINN, SA-PINN, and ELM, we use the same PDE residual loss and implement their respective training procedures in PyTorch. For a fair comparison, the neural network-based methods are trained using the same two-stage optimization procedure, consisting of Adam followed by L-BFGS, with a comparable number of features to that used in our PRF method. For ELM, the hidden-layer weights $\omegab$ are randomly sampled from Gaussian distribution with covariance matrix $\frac{1}{2}\Ib$ and bias term $b$ are randomly sampled form Uniform distribution $Unif[-1,1]$.
Then they are fixed throughout the training process, while only the output-layer coefficients are optimized. Unless otherwise specified, the same stopping criteria are used across all methods. We note that, for the numerical examples below, the smaller number of L-BFGS iterations used by some comparison methods is primarily due to their errors reaching a plateau at an early stage of the optimization. We report the average results over ten independent trials, with different sets of training points used in each trial. For each trial, the training points are randomly sampled, while the test points are fixed at the grid points for each problem.

\subsection{Error Decay Rate}
\label{sec:decay}
We first numerically investigate the approximation errors established in Theorems~\ref{Thm:L2} and~\ref{Thm:H1}. We use the following nonlinear Poisson equation in different spatial dimensions:
\begin{equation}
-\nabla \cdot (a(u)\nabla u) = f(x), x\in\Omega, \hspace{2cm}
u(x) = g(x), x\in\partial\Omega,
\end{equation}
where the domain is the $d$-dimensional cube
$\Omega=[-1,1]^d$ and $a(u)=u^2-u$. We set $f(x)$ and $g(x)$ to be
\begin{equation*}
f(x)
=
\frac{1}{d}
\left[
-3\exp\left(-\frac{3}{d}\sum_{i=1}^{d}x_i\right)
+2\exp\left(-\frac{2}{d}\sum_{i=1}^{d}x_i\right)
\right],\quad g(x) = \exp\left(-\frac{1}{d}\sum_{i=1}^{d}x_i\right).
\end{equation*}
The analytical solution is given by $u(x)=\exp\left(-\frac{1}{d}\sum_{i=1}^{d}x_i\right)$. 

For $d=2$, we sample random weights from Gaussian distribution with variance $\sigma^2=0.1$, while for dimensions $d=4$ and $d=8$, we sample random weights from Gaussian distributions with variance $\sigma^2=0.05$.
For all dimensions, we sample $400$ interior points and $80$ boundary points as training points, and $2000$ interior points and $208$ boundary points as testing points. For each case, we perform $2000$ Adam iterations followed by $500$ L-BFGS iterations to train the model.

We show the log-log plots of $L^2$ and $H^1$ errors versus number of features $N$ for dimensions $d=2,4, \mbox{and }8$ in Figure~\ref{fig:Convergence_Rate}. The estimated decay rates are also included in the legends. 
We observe that the estimated decay rates are consistent with those established in Theorems~\ref{Thm:L2} and~\ref{Thm:H1}, demonstrating the potential of our proposed method for solving high-dimensional PDEs. 
\begin{figure}[ht]
\centering 
\includegraphics[width=0.38\linewidth]{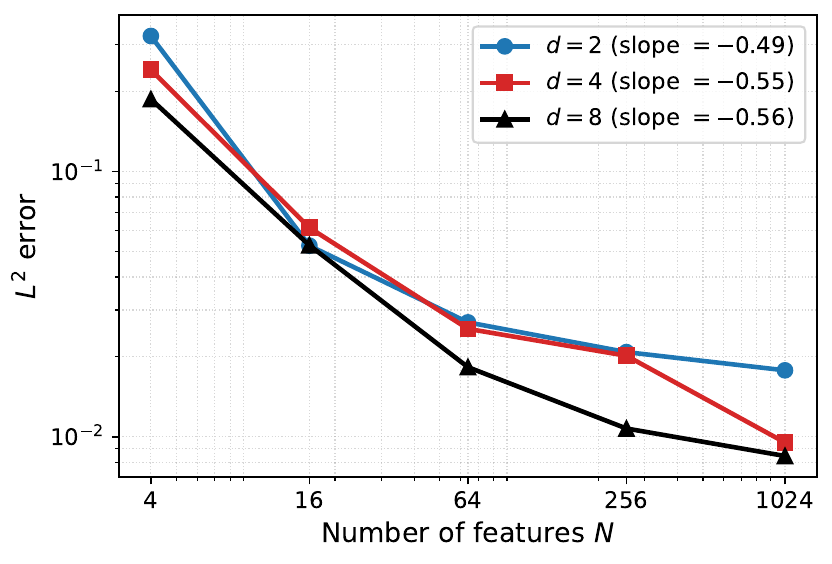}
\includegraphics[width=0.38\linewidth]{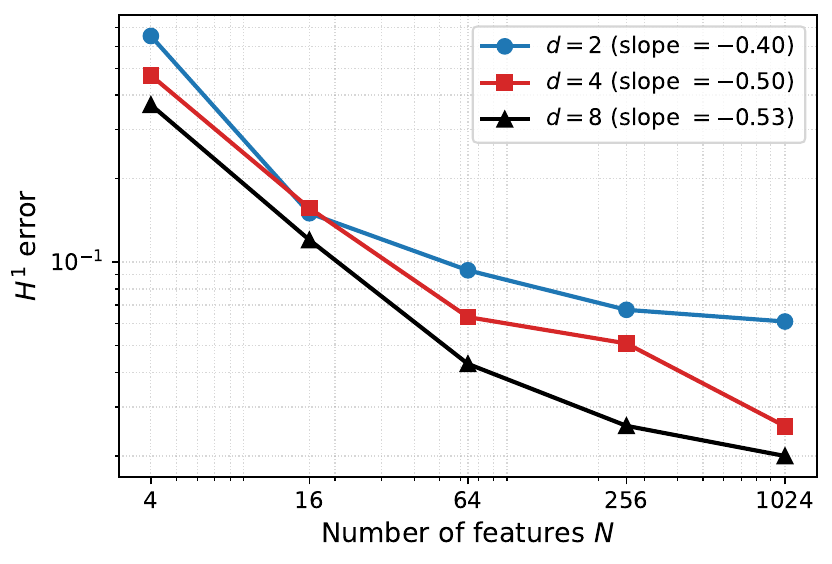}        
\caption{Log-log plots of $L^2$ error (left) and $H^1$ error (right) versus number of features $N$ for dimensions $d=2, 4, \mbox{and } 8$. The plots numerically verify the decay rates established in Theorems~\ref{Thm:L2} and~\ref{Thm:H1} }
\label{fig:Convergence_Rate}
\end{figure}

\subsection{Linear Advection-Diffusion Equation}\label{sec:LADE}
In this section, we compare our proposed product cosine random features with the uniform random feature proposed in \cite{Liao2026PDE}. We consider the following linear advection-diffusion equation 
\begin{equation}\label{eq:linear_advection}
\begin{cases}
u_t + cu_x = \nu u_{xx}, & (x,t) \in (-1,1) \times (0,1],\\
u(-1,t) = u(1,t), & t \in (0,1],\\
u(x,0) = \sin(\pi x), & x \in (-1,1),
\end{cases}
\end{equation}
whose exact solution is $
u(x,t) = \exp(-\nu \pi^2 t)\sin\bigl(\pi(x-ct)\bigr)$.
We set the wave speed to $c=5$ and the diffusion coefficient to $\nu=0.1$.

The uniform random feature model concatenate the spatial and temporal variables and therefore the random weights are sampled from the same distribution. 
In contrast, the product random feature representation treats the spatial and temporal variables separately. In this example, we demonstrate that the product feature representation can outperform the uniform random feature representation, especially when the spatial and temporal variables have different scales.

For each trial, we use the same set of training and testing points for both models. Specifically, we sample $5000$ interior points, $200$ boundary points ($100$ on each side), and $100$ initial points as training points, and $10000$ points as testing points.
For the comparison, we use the same total number of random features for both models. For the uniform random feature model, we set the number of features to $N=3000$ and the standard deviation parameter to $\sigma=10.0$. For the product random feature (PRF) model, we use $N_x=30$ spatial features and $N_t=100$ temporal features, giving a total of $N_xN_t=3000$ product features. The corresponding variance parameters are set to $\sigma_x=3.0$ and $\sigma_t=17.0$.

Table~\ref{tab:linear_advection_diffusion} compares the performance of the uniform and product random feature models. The PRF model achieves substantially lower RMSE and $L^{\infty}$ errors than the uniform random feature model. This result is expected, as the spatial and temporal variables have different scales ($c=5$ and $\nu=0.1$). 
The product random feature can effectively accommodate this difference by sampling the weights from different distributions. 
\begin{table}[ht]
\centering
\begin{tabular}{lccc}
\hline
Method & Relative $L^{2}$ Error & $L^{\infty}$ Error \\ 
\hline
Product RF & $ 1.6 \times 10^{-3} \pm 5.7 \times 10^{-4}$ & $ 7.6\times 10^{-3} \pm 4.3 \times 10^{-3}$ \\
Uniform RF & $1.6 \times 10^{-2} \pm 2.1 \times 10^{-2} $ & $2.5 \times 10^{-2} \pm 2.6\times 10^{-2} $ \\ 
\hline
\end{tabular}
\caption{Comparison of product and uniform random feature methods. We report average relative $L^2$ error and $L^\infty$ error accompanying with the standard deviations. Each experiment is averaged over 10 trials.} 
\label{tab:linear_advection_diffusion}
\end{table}

\subsection{Helmholtz Equation}
\label{sec:Helm}
In this section, we consider the 2D Helmholtz equation, which is a benchmark to test the ability of the method to approximate oscillatory solutions and to handle a second-order elliptic operator. The equation is given by
\begin{equation}
u_{xx} + u_{yy} + k^{2}u - q(x,y) = 0, (x,y)\in\Omega, \hspace{2cm} u(x,y)=0, (x,y)\in\partial\Omega,
\end{equation}
where the domain $\Omega=[-1,1]\times[-1,1]$ and the function $q(x,y)$ is chosen as 
$$q(x,y) = \big(k^{2} - (a_{1}\pi)^{2} - (a_{2}\pi)^{2}\big)   \sin(a_{1}\pi x)\sin(a_{2}\pi y).$$
The analytic solution is $u(x,y) = \sin(a_{1}\pi x) \sin(a_{2}\pi y)$. 
We set the parameters $k=1$, $a_{1}=1$, and $a_{2}=4$. In this example, we apply the product random feature because we observe the different scales of $x$ and $y$. 
We report the number of features $N_x$ and $N_t$ for spatial and temporal variables in Table~\ref{tab:model_settings_1}. Weights are randomly sampled from Gaussian distributions with standard deviations $\sigma_x$ and $\sigma_t$, respectively.
We compare with PINN, SA-PINN, and ELM. The networks architectures, activation functions,  and training settings are summarized in Table~\ref{tab:model_settings_1}.
For each trial, we use $4800$ interior points and 400 boundary points ($100$ on each side) for training, and $10{,}000$ points for testing. 
\begin{table}[htbp]
    \centering
    \begin{tabular}{lll}
        \hline
        Method & Model setting & Optimization \\
        \hline
        PIRFN
        & \makecell{$N_x=40,\ N_y=160,$ \\ $\sigma_x=2.0,\ \sigma_y=8.0$} 
        & Adam: $2500$; L-BFGS: $3000$ \\
        
        PINN
        & $\tanh$, $[2,50,50,50,50,1]$
        & Adam: $5000$; L-BFGS: $5000$ \\

        SA-PINN
        & $\tanh$, $[2,50,50,50,50,1]$
        & Adam: $10000$; L-BFGS: $500^{*}$ \\

        ELM
        & $\tanh$, $D=6400$
        & Adam: $2500$; L-BFGS: $3000$ \\
        \hline
    \end{tabular}
    \label{tab:model_settings_1}
    \caption{Model architectures and training settings for the 2D Helmholtz equation (numbers marked with * indicate early stopping).}
\end{table}

Figure~\ref{fig:helmholtz_solution} shows the analytic solution and the approximated solution obtained using the random feature method.  Table~\ref{tab:helmholtz_results} summarizes relative $L^2$ error and $L^\infty$ error for each method over 10 independent trials.  Our proposed PIRFN method achieves substantially lower errors than the other comparison methods in both errors.  These results indicate that our method can accurately capture the oscillatory solution of the two-dimensional Helmholtz equation.
\begin{figure}[ht]
\centering
\includegraphics[width=0.75\linewidth]{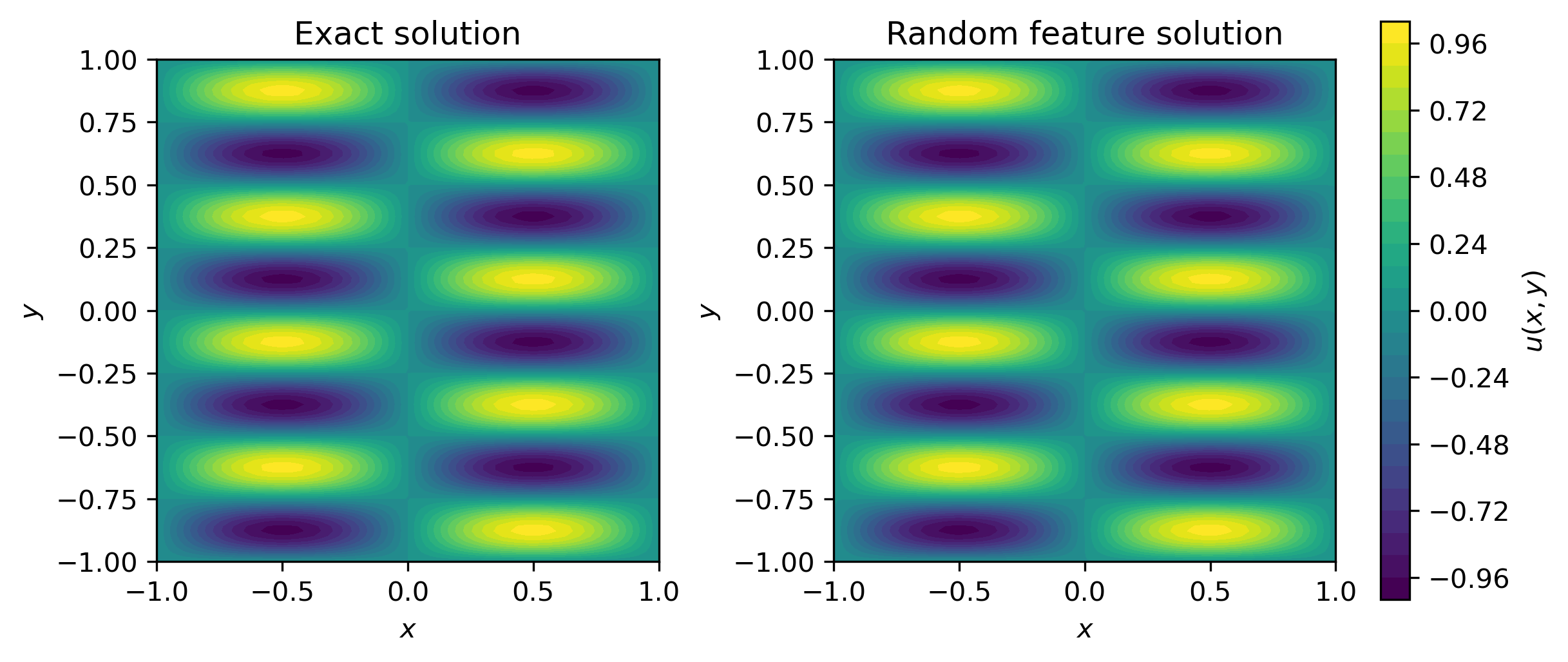}
\caption{Exact solution (left) and random feature approximation (right) for the 2D Helmholtz equation.}
\label{fig:helmholtz_solution}
\end{figure}

\begin{table}[ht]
\centering
\begin{tabular}{lcc}
\hline
Method & Relative $L^{2}$ Error & $L^{\infty}$ Error \\
\hline
PIRFN 
& $1.5 \times 10^{-2} \pm 1.5 \times 10^{-2}$ 
& $2.8 \times 10^{-2} \pm 1.5 \times 10^{-2}$ \\

PINN 
& $4.8 \times 10^{-1}\pm 7.7 \times 10^{-1}$ 
& $1.1 \pm 1.8$ \\

SA-PINN 
& $1.76 \pm 7.4 \times 10^{-1}$ 
& $3.00 \pm 1.01$ \\ 

ELM 
& $6.56 \pm 5.11$ 
& $8.63 \pm 5.44$ \\
\hline
\end{tabular}
\caption{2D Helmholtz Equation: We report relative $L^2$ error and $L^\infty$ error for each method. Errors are averaged over
$10$ independent trials.}
\label{tab:helmholtz_results}
\end{table}
\subsection{Linear Transport}
In this section, we consider the following periodic linear transport equation 
\begin{equation}
\begin{cases}
    u_t + c u_x = 0, & (x,t) \in (-1,1)\times(0,1),\\
    u(0,x) = \sin(\pi x), & x\in(-1,1),\\
    u(t,-1) = u(t,1), & t\in(0,1),
\end{cases}
\end{equation}
whose analytic solution is $u(t,x) = \sin\bigl(\pi(x-ct)\bigr)$.
Following the challenging transport regime considered in the PINN literature \cite{krishnapriyan2021characterizing}, we set the wave speed to $c=12$. Although this value is not large in an absolute sense, wave speeds beyond $c=10$ have been reported to cause significant training difficulties for vanilla PINNs. Thus, this example provides a challenging test of the ability of the learning methods to accurately capture rapidly propagating solutions.  The model architectures, activation functions, and training settings for this example are summarized in Table~\ref{tab:model_settings_2}. For each trial, we use $5000$ interior points, $200$ boundary points ($100$ on each side), and $100$ initial points for training, and $10{,}000$ points for testing.
\begin{table}[htbp]
    \centering
    \begin{tabular}{lll}
        \hline
        Method & Model setting & Optimization \\
        \hline
      PIRFN
        & \makecell{$N_x=30,\ N_t=120$,\\ $\sigma_x=3.0,\ \sigma_t=36.0$}         
        & Adam: $2500$; L-BFGS: $6000$ \\
        
        PINN
        & $\tanh$, $[2, 20 \times 10, 1]$
        & Adam: $6000$; L-BFGS: $500^{*}$ \\

        SA-PINN
        & $\tanh$, $[2, 20 \times 10,1]$
        & Adam: $6000$; L-BFGS: $500^{*}$ \\

        ELM
        & $\tanh$, $D=3600$
        & Adam: $2500$; L-BFGS: $1500$ \\
        \hline
    \end{tabular}
    \caption{Model architectures and training settings for the
    linear transport equation (numbers marked with * indicate early stopping).}
    \label{tab:model_settings_2}
\end{table}
In Figure~\ref{fig:linear_transport}, we depict the true solution and the random feature. We observe that the random feature approximation accurately captures the propagation of the wave throughout the space-time domain. Furthermore, as shown in Table~\ref{tab:linear_transport_results}, random feature method achieves lower errors compared with PINN and its variant, highlighting the difficulty of this problem for PINN-based approaches and the potential of random feature-based method for challenging space-time PDEs.
\begin{figure}[ht]
\centering
\includegraphics[width=\linewidth]{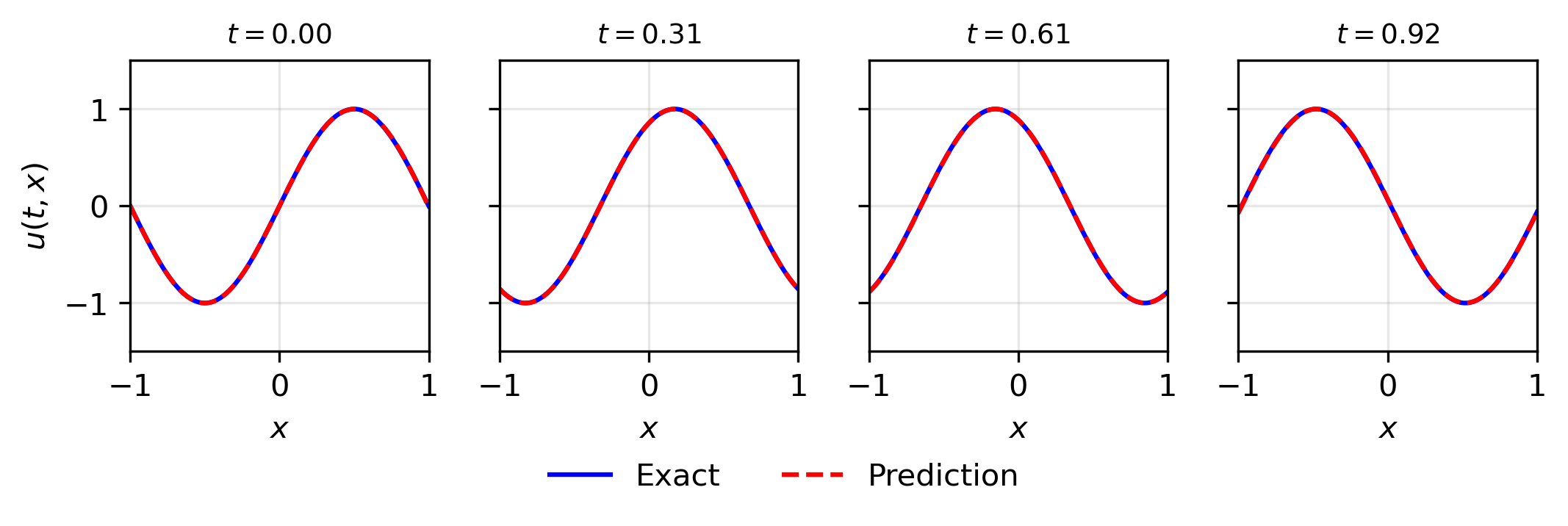}
\caption{Exact solution and random feature approximation for the linear transport equation with $c=12$.}
\label{fig:linear_transport}
\end{figure} 
\begin{table}[ht]
\centering
\begin{tabular}{lcc}
\hline
Method & Relative $L^{2}$ Error & $L^{\infty}$ Error \\
\hline
PIRFN   & $5.30 \times 10^{-3} \pm 2.97 \times 10^{-3}$ 
        & $2.64 \times 10^{-2} \pm 2.20 \times 10^{-2}$ \\
PINN   & $1.00 \pm 1.46 \times 10^{-1}$ & $1.37 \pm 2.68 \times 10^{-1}$ \\
SA-PINN & $9.60 \times 10^{-1} \pm 4.88 \times 10^{-3}$ 
        & $1.23 \pm 4.33 \times 10^{-1}$ \\
ELM    & $1.16 \pm 6.93 \times 10^{-2}$ 
        & $1.51 \pm 8.84 \times 10^{-2}$ \\
\hline
\end{tabular}
\caption{Linear Transport Equation: We report relative $L^2$ error and $L^\infty$ error for each method. Errors are averaged over 10 independent trials.}
\label{tab:linear_transport_results}
\end{table}
\subsection{Wave Equation}\label{sec:wave}
In this section, we consider the following 1D wave equation
\begin{equation}
\begin{cases}
u_{tt} - 4u_{xx} = 0, & (x,t)\in(0,1)\times(0,1),\\
u(x,0) = \sin(\pi x) + \frac{1}{2}\sin(4\pi x), & x\in[0,1],\\
u(0,t)=0,\quad u(1,t)=0, & t\in(0,1),\\
u_t(x,0)=0, & x\in[0,1].
\end{cases}
\end{equation}
The analytical solution is given by
\begin{equation*}
u(x,t) = \sin(\pi x)\cos(2\pi t) + \frac{1}{2}\sin(4\pi x)\cos(8\pi t).
\end{equation*}
This example involves multiple spatial and temporal frequencies and therefore provides a test of the ability of the learning methods to accurately capture multi-frequency spatiotemporal dynamics.
We compare our proposed method with PINN, SA-PINN, and ELM and summarize the model architectures and training settings for each method in Table~\ref{tab:model_settings_3}. 
For the collocation point used to train the models, we uniformly sample $5000$ interior points, $200$ boundary points ($100$ on each side), and $100$ initial points. We independently sample another set of $10{,}000$ points uniformly from the domain for testing.
\begin{table}[htbp]
    \centering
    \begin{tabular}{lll}
        \hline
        Method & Model setting & Optimization \\
        \hline
        PIRFN
        & \makecell{$N_x=70,\ N_t=140,$\\ $\sigma_x=8.0,\ \sigma_t=16.0$}        
        & Adam: $2500$; L-BFGS: $8000$ \\

        PINN
        & $\tanh$, $[2, \ 30 \times 12, \ 1]$
        & Adam: $3000$; L-BFGS: $1500$ \\

        SA-PINN
        & $\tanh$, $[2, \ 30 \times 12, \ 1]$
        & Adam: $5000$; L-BFGS: $500^{*}$ \\
        
        ELM
        & $\tanh$, $D=9800$
        & Adam: $2500$; L-BFGS: $1500$ \\
        \hline
    \end{tabular}
    \caption{Model architectures and training settings for the 1D
    wave equation (numbers marked with * indicate early stopping).}
    \label{tab:model_settings_3}
\end{table}

We first compare the random feature approximation with the exact solution in Figure~\ref{fig:wave}, which shows that the random feature  approximation has the ability to capture the spatiotemporal evolution of the wave and reproduces the solution accurately throughout the space-time domain. 

\begin{figure}[ht]
\centering
\includegraphics[width=\linewidth]{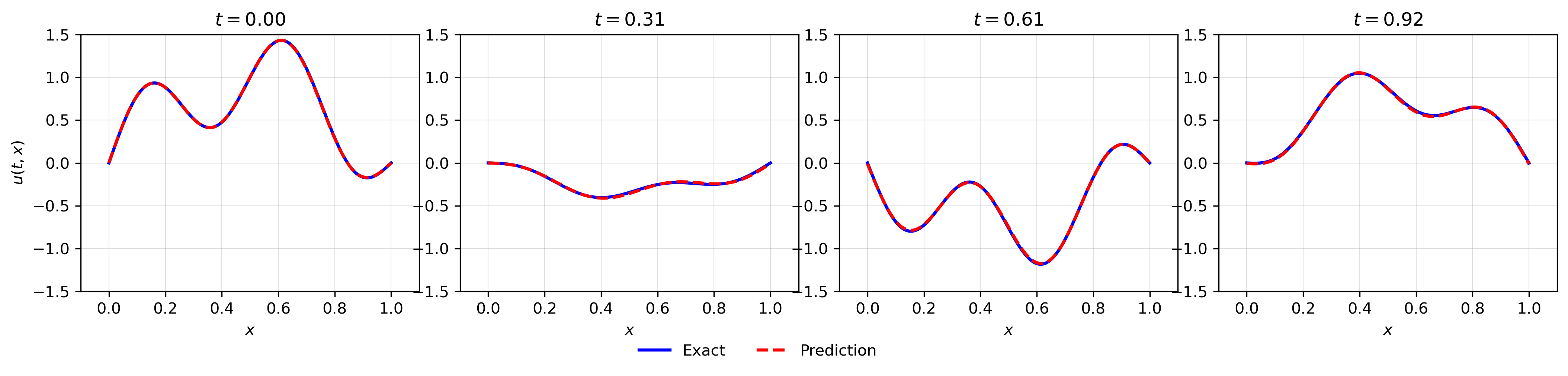}
\caption{Comparison of the exact solution and the random feature approximation for the 1D wave equation.}    
\label{fig:wave}
\end{figure} 
In addition, we report relative $L^2$ error and $L^\infty$ error of each method in Table~\ref{tab:wave_results}. 
As shown in the table, PIRFN achieves an $L^2$ relative error on the order of $10^{-2}$, whereas all other methods yield errors on the order of $10^{-1}$, corresponding to an improvement of approximately one order of magnitude.
Similarly, PIRFN achieves the smallest $L^\infty$ error among all other methods, demonstrating its effectiveness in this multi-frequency oscillatory setting.

\begin{table}[!ht]
\centering
\begin{tabular}{lccc}
\hline
Method & Relative $L^{2}$ Error & $L^{\infty}$ Error \\
\hline
PIRFN & $ 4.29  \times 10^{-2} \pm 4.31 \times 10^{-2} $ & $ 1.04 \times 10^{-1} \pm 8.91 \times 10^{-2} $ \\ 
PINN & $9.51 \times 10^{-1} \pm 6.51 \times 10^{-2}$ & $1.43 \pm 6.80 \times 10^{-2}$  \\
SA-PINN & $6.23 \times 10^{-1} \pm 1.92 \times 10^{-1} $ &$9.50 \times 10^{-1} \pm 3.23 \times 10^{-1} $\\
ELM & $ 4.57 \times 10^{-1} \pm 5.55 \times 10^{-3}$ & $6.08 \times 10^{-1} \pm 2.52 \times 10^{-2} $\\
\hline
\end{tabular}
\caption{1D Wave Equation: We report relative $L^2$ error and $L^\infty$ error for each method. Errors are averaged over 10 independent trials.}
\label{tab:wave_results}
\end{table}
\section{Conclusions}\label{sec:conclude}
In this work, we developed a physics-informed random feature network framework for solving partial differential equations. The framework combines kernel-induced random features with a physics-informed loss, optimizing only the output layer coefficients through automatic differentiation and Adam combined with L-BFGS training. We introduced a product random feature representation that allows separate choices of sampling distributions and feature counts for spatial and temporal variables, or for different spatial directions. This construction provides flexibility in representing solutions with distinct characteristic scales. On the theoretical side, we derived high-probability approximation error bounds in the Sobolev $H^1$ norm under suitable assumptions on the target function and random features.

Numerical experiments demonstrated the approximation capabilities of the proposed framework across several PDE problems. Numerical studies for a nonlinear Poisson equation in dimensions two, four, and eight exhibited error decay consistent with the theoretical estimates. For the advection–diffusion problem, the product representation improved accuracy over the uniform random feature model at the same total feature count. On the Helmholtz, linear transport, and wave equation benchmarks, the proposed method achieved lower errors than the tested PINN, self-adaptive PINN, and ELM configurations. These findings highlight the importance of matching random feature distributions to the characteristic scales of the solution and support further investigation of this approach for oscillatory and multiscale PDEs.

Several directions remain for future work. Extending the approximation analysis to account for finite collocation sampling, optimization error, and PDE stability would provide a more complete understanding of the accuracy of the trained solver.  The effect of using the minimal RKHS norm in the optimization setting~\eqref{MinnormRF_PDE} is another important discussion on the well-posedness of the PIRFN's optimization problem.  Developing automatic or adaptive strategies for selecting kernels and sampling distributions is another important step, as these choices currently depend on the problem. Finally, localized or adaptive random feature representations may improve the treatment of localized non-smooth structures, including shocks, and broaden the applicability of the framework.
\appendix
\section{Some Useful Concentration Inequalities}
In this section, we recall some classical concentration inequalities from \cite{Pinelis1986remark} that estimates the difference between empirical averages and true averages of random vectors. 

\begin{theorem}[Vector-valued Bernstein inequality in Hilbert space]
\label{Thm:Bern}
Let $Z$ be a H-valued random variable, where $(H,\langle\cdot,\cdot\rangle,\|\cdot\|)$ is a separable Hilbert space. Suppose there exist positive numbers $b>0$ and $\sigma>0$ such that
\begin{equation}
\label{B_mom_cond}
\Eb \| Z - \Eb Z\|^p \leq \frac{1}{2}p!\sigma^2b^{p-2} \quad \mbox{ for all } p\geq 2.
\end{equation}
For any $\delta\in(0,1)$ and $N\in\N$, denoting by $\{Z_n\}_{n=1}^N$ a sequence of $N$ i.i.d copies of $Z$. it holds that
\begin{equation}
\Pbb\left( \left\| \frac{1}{N}\sum_{n=1}^N Z_n - \Eb Z\right\| \leq \frac{2b\log(2/\delta)}{N} + \sqrt{\frac{2\sigma^2\log(2/\delta)}{N}}\right) \geq 1-\delta. 
\end{equation}
\end{theorem}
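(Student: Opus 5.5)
The plan is the classical Pinelis argument. Bound the moment generating function of $\|S_N\|$, where $S_N:=\sum_{n=1}^N \xi_n$ and $\xi_n := Z_n-\Eb Z$ are centered i.i.d.\ $H$-valued variables. Convert this into an exponential tail bound by Markov's inequality. Then invert the tail bound at confidence level $\delta$. Since $\frac1N\sum_n Z_n - \Eb Z = S_N/N$, it suffices to show that with probability at least $1-\delta$,
\begin{equation*}
\|S_N\| \leq 2b\log(2/\delta) + \sqrt{2N\sigma^2\log(2/\delta)}.
\end{equation*}

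\textbf{Step 1 (key lemma).} Fix $\lambda>0$. Show that
\begin{equation*}
\Eb \cosh(\lambda\|S_N\|) \leq \prod_{n=1}^N \bigl(1+e_n\bigr), \qquad e_n := \Eb\bigl(e^{\lambda\|\xi_n\|}-1-\lambda\|\xi_n\|\bigr).
\end{equation*}
I would prove this by induction on $n$, conditioning on $\xi_1,\dots,\xi_{n-1}$. Write $x=S_{n-1}$ and $u(y)=\cosh(\lambda\|y\|)$. The aim is the one-step bound $\Eb\, u(x+\xi_n)\leq u(x)(1+e_n)$.
\begin{itemize}
\item Expand $s\mapsto u(x+s\xi_n)$ by Taylor's formula on $[0,1]$.
\item The first-order term is linear in $\xi_n$, so it vanishes in expectation because $\Eb\xi_n=0$ and $\xi_n$ is independent of $x$.
\item The remainder is controlled by the second derivative of $\cosh(\lambda\|\cdot\|)$ along the segment. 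Here the Hilbert structure (2-smoothness of the norm, i.e.\ the parallelogram law) gives $\frac{d^2}{ds^2}u(x+s y)\leq \lambda^2\|y\|^2\cosh(\lambda\|x\|)e^{\lambda s\|y\|}$.
\item Integrating this bound reproduces exactly $u(x)\,\Eb(e^{\lambda\|\xi_n\|}-1-\lambda\|\xi_n\|)$.
\end{itemize}
I expect this lemma to be the main obstacle. The delicate points are the second-derivative estimate of $\cosh(\lambda\|\cdot\|)$ near the origin, where the norm is not smooth, and the use of the inner-product geometry. I would first try to handle the origin by working with the smooth function $\cosh(\lambda\sqrt{\|y\|^2})$ and the identity $\|x+y\|^2=\|x\|^2+2\langle x,y\rangle+\|y\|^2$.

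\textbf{Step 2 (use the moment condition).} Expand $e_n$ in a power series and apply \eqref{B_mom_cond}. For $\lambda b<1$,
\begin{equation*}
e_n=\sum_{p\geq2}\frac{\lambda^p\Eb\|\xi_n\|^p}{p!}\leq \frac{\sigma^2\lambda^2}{2}\sum_{p\geq2}(\lambda b)^{p-2}=\frac{\sigma^2\lambda^2}{2(1-\lambda b)}.
\end{equation*}
Using $1+e\leq e^{e}$ and $e^{\lambda t}\leq 2\cosh(\lambda t)$, Markov's inequality then gives
\begin{equation*}
\Pbb(\|S_N\|\geq t)\leq 2\exp\Bigl(-\lambda t+\frac{N\sigma^2\lambda^2}{2(1-\lambda b)}\Bigr).
\end{equation*}
Choosing $\lambda=t/(N\sigma^2+bt)$, which satisfies $\lambda b<1$, yields the Bernstein tail
\begin{equation*}
\Pbb(\|S_N\|\geq t)\leq 2\exp\Bigl(-\frac{t^2}{2(N\sigma^2+bt)}\Bigr).
\end{equation*}

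\textbf{Step 3 (inversion).} Set the right-hand side equal to $\delta$ and write $L=\log(2/\delta)$. This gives the quadratic $t^2-2bLt-2N\sigma^2L=0$, whose positive root is
\begin{equation*}
t = bL+\sqrt{b^2L^2+2N\sigma^2L}\leq 2bL+\sqrt{2N\sigma^2 L},
\end{equation*}
using $\sqrt{a+c}\leq\sqrt a+\sqrt c$. Dividing by $N$ gives the stated bound with probability at least $1-\delta$.
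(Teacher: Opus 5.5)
Your proof is correct, including the one-step bound, which holds at the origin too since $\cosh(\lambda\sqrt{v})$ is entire in $v$ and $\frac{d^2}{ds^2}\cosh(\lambda\|x+sy\|)\le\lambda^2\|y\|^2\cosh(\lambda\|x\|)e^{\lambda s\|y\|}$, and Steps 2--3 give exactly the stated constants. The paper proves nothing here and simply quotes the result from Pinelis; your $\cosh$-martingale argument is the classical Pinelis-type proof behind that citation.
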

One of the most frequently used forms of Berstein's inequality is the following setting for bounded random variables. 

\begin{lemma}
\label{lemma:bound}
Let $Z$ be a (potentially) uncentered random variable such that
\begin{equation*}
\|Z\| \leq c \mbox{ almost surely } \quad \mbox{ and } \Eb\|Z-\Eb Z\|^2 \leq v^2 
\end{equation*}
for some $c>0$ and $v>0$. Then $Z$ satisfies Bernstein's moment condition \eqref{B_mom_cond} with $b=2c$ and $\sigma=v$. If $\Eb Z=0$, then taking $b=c$ suffices.
\end{lemma}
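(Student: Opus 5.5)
The plan is to verify the Bernstein moment condition \eqref{B_mom_cond} directly. The idea is to peel off two powers of $\|Z-\Eb Z\|$ to produce the variance factor $v^2$, and to bound the remaining $p-2$ powers by an almost-sure bound on the centered variable.

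\textbf{Step 1: almost-sure bound on the centered variable.} Since $\|Z\|\le c$ almost surely, Jensen's inequality (convexity of the norm) gives $\|\Eb Z\|\le \Eb\|Z\|\le c$. The triangle inequality then yields
\begin{equation*}
\|Z-\Eb Z\| \le \|Z\|+\|\Eb Z\| \le 2c \quad \text{almost surely}.
\end{equation*}
In the centered case $\Eb Z=0$, the bound is simply $\|Z-\Eb Z\|=\|Z\|\le c$.

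\textbf{Step 2: moment bound.} For every integer $p\ge 2$, set $b=2c$ (or $b=c$ in the centered case). Then
\begin{equation*}
\Eb\|Z-\Eb Z\|^p = \Eb\bigl[\|Z-\Eb Z\|^{p-2}\,\|Z-\Eb Z\|^2\bigr] \le b^{p-2}\,\Eb\|Z-\Eb Z\|^2 \le v^2 b^{p-2}.
\end{equation*}

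\textbf{Step 3: compare with the required right-hand side.} Condition \eqref{B_mom_cond} asks for $\tfrac12 p!\,v^2 b^{p-2}$. Since $p!\ge 2$ for all $p\ge 2$, we have $\tfrac12 p!\ge 1$, so
\begin{equation*}
v^2 b^{p-2} \le \tfrac12 p!\,v^2 b^{p-2}.
\end{equation*}
This establishes \eqref{B_mom_cond} with $\sigma=v$, and with $b=2c$ in general or $b=c$ when $\Eb Z=0$.

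The argument is elementary. The only point needing care is Step 1: the almost-sure bound must be placed on the \emph{centered} variable, and this is exactly why the constant doubles to $2c$ when $Z$ is uncentered. If the moment condition were required for non-integer $p\ge2$, the only change would be to read $p!$ as $\Gamma(p+1)$. Then $\Gamma(p+1)\ge 2$ for $p\ge 2$ holds because $\Gamma$ is increasing on $[2,\infty)$, and the same chain of inequalities applies unchanged.
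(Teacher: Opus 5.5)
Your proof is correct and complete. The paper gives no proof of this lemma (it is simply recalled in the appendix next to Pinelis's inequality \cite{Pinelis1986remark}), and your three steps are the standard verification: the almost-sure bound $\|Z-\Eb Z\|\le 2c$ via $\|\Eb Z\|\le \Eb\|Z\|\le c$, the peeling estimate $\Eb\|Z-\Eb Z\|^p\le b^{p-2}v^2$, and $\tfrac12 p!\ge 1$.

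As a side remark, discarding the factor $\tfrac12 p!$ in Step 3 gives away more than necessary. Since $2^{p-1}\le p!$ for every integer $p\ge 2$, your chain already yields $\Eb\|Z-\Eb Z\|^p\le (2c)^{p-2}v^2\le \tfrac12 p!\,v^2c^{p-2}$. So $b=c$ in fact suffices even without centering.
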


\section*{Acknowledgments}
Zhong is supported by NSF-CCF-AoF grant $\#2225507$. CL is supported by the start-up funding from University of Arkansas.
\bibliographystyle{siamplain}
\bibliography{RFN_bib}
\end{document}

%% file: marco.tex
\theoremstyle{plain}
\newtheorem{remark}[theorem]{Remark}

\newcommand{\yb}{\mathbf{y}}

\newcommand{\xb}{\mathbf{x}}
\newcommand{\cb}{\mathbf{c}}

\newcommand{\Ab}{\mathbf{A}}

\newcommand{\Ib}{\mathbf{I}}
\newcommand{\Kb}{\mathbf{K}}
\newcommand{\Eb}{\mathbf{E}}

\newcommand{\C}{\mathbb{C}}
\newcommand{\R}{\mathbb{R}}

\newcommand{\N}{\mathbb{N}}

\newcommand{\Pbb}{\mathbb{P}}

\newcommand{\Fc}{\mathcal{F}}
\newcommand{\Hc}{\mathcal{H}}

\newcommand{\Nc}{\mathcal{N}}
\newcommand{\Mc}{\mathcal{M}}
\newcommand{\Oc}{\mathcal{O}}
\newcommand{\Pc}{\mathcal{P}}
\newcommand{\Bc}{\mathcal{B}}
\newcommand{\Lc}{\mathcal{L}}

\newcommand{\irm}{\mathrm{i}}

\newcommand{\minimize}{\mathop{\mathrm{minimize}}}
\newcommand{\argmin}{\mathop{\mathrm{argmin}}}

\newcommand{\mix}{\mathrm{mix}}

\newcommand{\indicator}{\mathbbm{1}}
\newcommand{\omegab}{\boldsymbol\omega}